\newtheorem{theorem}{Theorem}[section]
\newtheorem{lemma}[theorem]{Lemma}
\newtheorem{proposition}[theorem]{Proposition}
\newtheorem{corollary}[theorem]{Corollary}
\theoremstyle{definition}
\theoremstyle{remark}
\newtheorem{remark}[theorem]{Remark}
\newtheorem{notations}[theorem]{Notations}
\newcommand{\oc}{\ensuremath{\mathcal{O}}}
\newcommand{\fc}{\ensuremath{\mathcal{F}}}
\newcommand{\dc}{\ensuremath{\mathcal{D}}}
\newcommand{\hc}{\ensuremath{\mathcal{H}}}
\newcommand{\Sc}{\ensuremath{\mathcal{S}}}
\newcommand{\kc}{\ensuremath{\mathcal{K}}}
\newcommand{\nc}{\ensuremath{\mathcal{N}}}
\newcommand{\pc}{\ensuremath{\mathcal{P}}}
\newcommand{\mc}{\ensuremath{\mathcal{M}}}
\newcommand{\qc}{\ensuremath{\mathcal{Q}}}
\newcommand{\bH}{\mathbb{H}}
\newcommand{\bZ}{\mathbb{Z}}
\newcommand{\bF}{\mathbb{F}}
\newcommand{\bC}{\mathbb{C}}
\newcommand{\bCl}{\mathbb{C}^l}
\newcommand{\bQ}{\mathbb{Q}}
\newcommand{\bG}{\mathbb{G}}
\def\bin #1#2 {\left( \matrix { #1 \cr #2 \cr } \right) }
\newcommand{\tS}{\widetilde{{\ensuremath{\mathcal{S}}}}}
\newcommand{\tD}{\widetilde{\Delta}}
\newcommand{\tDpq}{\widetilde{\Delta}_{pq}}
\begin{document}

\title[Explicit Decomposition Theorem for special Schubert varieties.]
{Explicit Decomposition Theorem for special Schubert varieties.}

\author{Davide Franco }
\address{Universit\`a di Napoli
\lq\lq Federico II\rq\rq, 
Dipartimento di Matematica e
Applicazioni \lq\lq R. Caccioppoli\rq\rq,  Via
Cintia, 80126 Napoli, Italy} \email{davide.franco@unina.it}

\abstract We provide an explicit canonical description of the perverse cohomology sheaves and of the primitive perverse cohomology complexes
for the non-small
resolution $\pi: \tilde{\mathcal S} \to \mathcal S$ of a Special Schubert variety $\mathcal S$. For such a resolution, we also discuss a way to obtain an 
explicit splitting 
of $R\pi{_*}\bQ_{\tS}$, in the derived category, by means of Gysin morphisms and 
cohomology extensions.

\bigskip\noindent {\it{Keywords}}: Cohomology extension, Gysin morphism,  Derived category, Intersection cohomology,
Decomposition Theorem,  
Schubert varieties, Resolution of
singularities.

\medskip\noindent {\it{MSC2010}}\,:  Primary 14B05; Secondary 14E15, 14F05,
14F43, 14F45, 14M15, 32S20, 32S60, 58K15.

\endabstract
\maketitle

\bigskip
\section{Introduction}

The Decomposition Theorem is a beautiful and very deep result about
algebraic maps. In the words of MacPherson \lq\lq it contains as
special cases the deepest homological properties of algebraic maps
that we know\rq\rq  \cite{Mac83}, \cite{Williamson}. 
In the literature
one can find different approaches to the Decomposition Theorem
\cite{BBD}, \cite{DeCMHodge}, \cite{DeCMReview}, \cite{Saito},
{\cite{Williamson}, which is a very general result but also rather implicit.

However,  there are many special cases for which the Decomposition
Theorem admits a simplified and explicit approach. One of these is  the case of
varieties with isolated singularities \cite{N}, \cite{DGF3}. For instance,
in our previous work with V. Di Gennaro \cite{DGF3}, we developed a simplified approach to the Decomposition Theorem for varieties 
with isolated singularities, in connection with the existence of a \textit{natural Gysin morphism}, as defined in \cite[Definition 2.3]{DGF1}. 

Another  case for which the Decomposition Theorem admits an explicit statement was developed in \cite{DGF2strata}, where we
 assumed 
 $X$ to be a complex, irreducible, quasi-projective variety {\it  with two strata}, that is to say
equipped  with a resolution consisting in a single blow-up of a smooth locus.
Our main result was a  short and explicit proof of the Decomposition Theorem \cite[Theorem 3.1]{DGF2strata}, and an explicit way to compute the intersection cohomology of 
$X$ by means  of the cohomology of  the exceptional locus. We also applied our formalism to    {\it Special Schubert varieties with two strata} \cite[\S 4]{DGF2strata}  and to 
general hypersurfaces containing a smooth threefold   $T\subset \mathbb P^5$.

What we are going to do in this paper is to extend a similar explicit approach  to {\it Special Schubert varieties with an arbitrary number of strata}.

Let $\mathcal S$ be a {\it single condition Schubert variety or special Schubert variety} of dimension $n$
\cite[p. 328]{CGM}, \cite[Example 8.4.9]{Kirwan}. As  is well known, $\Sc$ admits two standard resolutions: a
{\it small resolution} $\xi: \dc \to \Sc$ \cite[Definition 8.4.6]{Kirwan} and a (usually) non-small one
$\pi: \tS \to \Sc$ \cite[\S 3.4 and Exercise 3.4.10]{Manivel}. We will describe both resolutions $\pi $ and $\xi $ in \S 2 and \S 3, respectively.
By \cite[\S 6.2]{GMP2} and \cite[Theorem 8.4.7]{Kirwan}, we have 
\begin{equation}\label{decsmall}
IC^{\bullet}_{\Sc}\simeq R\xi {_*}\bQ_{\dc}[n] \quad\text{{\rm  in}} \quad D^b_c(\Sc),
\end{equation}
where $IC^{\bullet}_{\Sc}$
denotes the \textit{intersection cohomology complex} of $\Sc$ \cite[p. 156]{Dimca2},  and $D^b_c(\Sc)$ is the constructible derived category  
 of sheaves of $\mathbb Q$-vector spaces on $\Sc$. 

One of the main consequences of the Decomposition Theorem
is  that the intersection cohomology complex of $\Sc$ is also a direct summand of 
$R\pi {_*}\bQ_{\tS}[n]$ in $D^b_c(\Sc)$. Specifically, the Decomposition Theorem says that there is a decomposition in $D^b_c(\Sc)$ \cite[Theorem 1.6.1] {DeCMReview} 
\begin{equation}\label{decomposition}
R\pi {_*}\bQ_{\tS}[n]\cong \oplus_{i\in \bZ }\sideset{_{}^{p}}{_{}^{i}}{\mathop \hc} (R\pi {_*}\bQ_{\tS}[n])[-i],
\end{equation}
where
$\sideset{_{}^{p}}{_{}^{i}}{\mathop \hc}(R\pi {_*}\bQ_{\tS}[n])$ denote the {\it perverse cohomology sheaves}
\cite[\S 1.5]{DeCMReview}. Furthermore, the perverse sheaves $\sideset{_{}^{p}}{_{}^{i}}{\mathop \hc}(R\pi {_*}\bQ_{\tS}[n])$ are semisimple, i.e. direct sum of  intersection cohomology complexes
of semisimple local systems, supported in the smooth strata of $\Sc$.  
Our variety $\mathcal S$ is equipped with  a  standard Whitney stratification: 
\begin{equation}\label{flagintr}
\Delta_1\subset\Delta_2\subset\dots\subset\Delta_r\subset\Delta_{r+1}=\mathcal S,
\end{equation}
where
 each $\Delta_p$ is a special Schubert variety as well and such that
\begin{equation}\label{desingstrata}
\pi^{-1}(\Delta_p^0)\to
\Delta_p^0, \quad \Delta_p^0:=\Delta_p \backslash {\text{Sing}}(\Delta_p),
\end{equation}
is a smooth and proper fibration on $\Delta_p^0$, with fibre $F_p$ (\ref{Fiber}).
In other words, the stratification
(\ref{flagintr})
 is a
{\it stratification of} $\Sc$ {\it adapted to} $\pi$ \cite{CGM},
\cite{Williamson}.

\textit{The main aim of this paper is to determine the summands  involved in (\ref{decomposition})  and to provide an explicit description of  the splitting. } 

The discussion of the summands in (\ref{decomposition}), which is a somewhat easier task, is accomplished in
 section 3. The starting point of our analysis stems from the remark that \textit{the semisimple local systems involved in the decomposition are constant sheaves supported in the smooth 
 strata} $\Delta _p^0$. In other words, the decomposition (\ref{decomposition})  takes the following form

\begin{equation}\label{decimpr}
R\pi {_*}\bQ_{\tS}\cong \bigoplus_{p, q}IC_{\Delta_p}^{\bullet}[q]^{\oplus m_{pq}}
\end{equation}
for suitable multiplicities $m_{pq}\in \mathbb N_0$. As explained in \S 3, this follows just combining the Decomposition Theorem with  the global invariant cycle theorem, taking also into account of the
resolution of the strata (\ref{desingstrata}).

Owing  of  the splitting (\ref{decimpr}), 
 the only thing we need to do is the computation of the occurring multiplicities $m_{pq}$. This  will be achieved in Theorem \ref{PerverseCohomology} 
by means of a combinatoric argument based on dimension of the stalk cohomology groups
$$\hc^{\bullet}(R\pi {_*}\bQ_{\tS})_y \cong H^{\bullet}(\pi ^{-1}(y)) \quad{\text{and}} $$
$$\hc^{\bullet}(IC_{\Delta_p}^{\bullet})_y \cong H^{\bullet + \dim \Delta_p}(\xi_p ^{-1}(y)), $$
where $ \xi _p: \dc _p \to \Delta_p$ denotes the small resolution of the stratum $\Delta_p$.
On that basis, the proof of Theorem \ref{PerverseCohomology}
reduces to a careful dimensional counting.

It will turn out  that the perverse cohomology sheaf supported on a given stratum $\Delta_p$ is completely controlled 
by the cohomology of a sub-Grassmannian $T_p$ of the fiber $F_p$ (cf. Notations \ref{Dpq}). Also the relative Hard Lefschetz theorem for perverse cohomology sheaves
coincide with the  classical Hard Lefschetz theorem for $T_p$ (\ref{Hard}). 

The rest of the paper is devoted to a more subtle discussion of the splittings.
We begin in  section 5, where we also provide a
 geometric explanation of the results of \S 3. Therein we  describe the splitting (\ref{decomposition}) by means of the Gysin morphism and the 
Leray-Hirsch Theorem.
By Leray-Hirsch Theorem and Proper Base Change (\S 5 and \cite[Theorem 2.3.26]{Dimca2}), the complex $R\pi {_*}\bQ_{\tS}[n]\mid_{\Delta_p^0}$ can be explicitly described   as
a  direct sum of shifted trivial local systems on $\Delta_p^0$, with summand  in one-to-one correspondence with the cohomology classes of $F_p$:

\begin{equation}
\label{LHintro}
\bigoplus_{\alpha=0}^{2dimF_p} H^{\alpha}(F_p)\otimes \mathbb Q_{\Delta_p^0}[-\alpha]
\cong R{\pi}{_*}\mathbb  Q_{\mathcal S}\mid_{\Delta_p^0}.
\end{equation} Such a description is 
\textit{non-canonical and depends on the choice of a cohomology extension}. In Remarks \ref{cohomologyextension} and \ref{differentcohext} 
 we  discuss the effect that a change of the cohomology extension has on the splitting (\ref{LHintro}).
Furthermore, the Gysin morphism from a standard resolution of $\pi^{-1}(\Delta_p)$ allows us to define suitable maps (also depending on the cohomology extension)

$$
a\otimes_{\mathbb Q}
IC_{\Delta_p}^{\bullet}[-s - \alpha]
\rightarrow  R{\pi}_{*}\mathbb Q_{\mathcal S}[l], \quad \forall a\in H^{\alpha}(F_p),
$$
where $l:=n - \dim \pi^{-1}(\Delta_p)$ and $s:=\dim \Delta_p$.
It will turn out that the image of the previous morphisms,
with $a$ varying among the cohomology classes of the fibre $F_p$ that are not killed by the top Chern class of the normal bundle of 
$\pi^{-1}(\Delta_p^0)$ in $\tS$, generate the perverse cohomology sheaves, thus providing a \textit{canonical} (i.e. independent on the cohomology extension) isomorphism:

\begin{equation}
\label{perverseintro}
\sideset{_{}^{p}}{_{}^{i}}{\mathop \hc}(R\,\pi _{ *}
\mathbb Q_{\widetilde{\mathcal S}}[n])_{\Delta_p} \cong     
H^{\dim T_p + i}(T_p)\otimes_{\mathbb Q}
 IC_{\Delta_p}^{\bullet},
\end{equation}
(cf. Theorem  \ref{isoDE}, Remark \ref{commento} and (\ref{Pieridim})). Along the way, we also obtain an explicit splitting (\ref{decomposition}), depending on the choice of the cohomology extensions
(cf. Theorem \ref{Gysinsplitting}).

Having defined an explicit non-canonical splitting, we will turn in 
sections 6 and 7 to the task of defining a canonical one. In \S 6 we will prove that  the \textit{primitive perverse cohomology complexes} can be canonically
defined by means of the primitive cohomology of the sub-Grassmannians $T_{p}$. Furthermore, we will prove that the canonical morphism defined by Deligne in \cite[Proposition 2.4]{Deligne}
can be obtained as in \S 5 as soon as the cohomology extensions are defined appropriately (cf. Theorem \ref{Deligne}).
So, the first Deligne splitting (\cite[2.5.1]{Deligne}) can be obtained by means of the Gysin maps, up to a suitable choice of the cohomology extensions.

 In \S 7 we will prove that there is a $\pi$-ample divisor $h$
and a set of cohomology extensions for which the decomposition above is \textit{h-good}} (\cite[Definition 2.4.5]{DeC}). 
We recall that, in the paper \cite{DeC}, de Cataldo 
constructed five distingushed mixed-Hodge theoretic good splittings of the hypercohomology of the derived direct image  \cite[Theorem 1.1.1]{DeC}.
In general, the five good splittings turn out to be pairwise distinct but there is a natural condition, the existence of a $h$-good splitting
\cite[Definition 2.4.5]{DeC}, under which they in fact coincide \cite[Theorem 2.6.3]{DeC}. Such a condition says that the cup product with $h$, the 
first Chern class of a relatively ample line bundle, is \textit{homogeneous of degree two} (cf. \cite[\S 2]{DeC} and section 7). We will prove that this is indeed the case for the splitting provided by the Gysin morphism
if the $\pi$-ample divisor $h$ and the cohomology extensions are defined appropriately.

The last issue we are concerned with is addressed in \S 8. It is well known that, in the decomposition by supports of the perverse cohomology sheaves
$\sideset{_{}^{p}}{_{}^{i}}{\mathop \hc}(R\pi {_*}\bQ_{\tS}[n])\cong \oplus_r \sideset{_{}^{p}}{_{}^{i}}{\mathop \hc}(R\pi {_*}\bQ_{\tS}[n])_{\Delta_r}$, the summands are canonically defined.
Is it possible to {\it  identify  the canonical summands of } 
$\sideset{_{}^{p}}{_{}^{i}}{\mathop \hc}(R\pi {_*}\bQ_{\tS}[n])\mid _{\Delta _r^0}$?
In particular, is it possible to identify in the previous splitting the contribution of $IC^{\bullet}_{\Sc}\simeq R\xi {_*}\bQ_{\dc}[n]$, in such a way to compare the two standard resolutions of $\mathcal S$?
We will prove in Theorem \ref{LocStudy}  that the splitting
above
leads to a canonical decomposition of the cohomoloy spaces of $F_p$, where it is possible to ``recognize'' the summand coming from the fibre of the small resolution (cf. ((\ref{gammaAlow}), Theorem \ref{LocStudy}
and Remark \ref{recognize1}).

\vskip3mm

\textit{Acknowledgments} I would like to thank the referee for  carefully reading my manuscript and  for giving  a lot of advice that helped me to substantially improve the overall clarity of the paper.
\medskip

\section{Notations and Basic Facts}

\subsection{}

From now on, unless  otherwise stated, all cohomology and intersection cohomology groups are with
$\mathbb Q$-coefficients.

\medskip
\noindent
For a complex  algebraic variety $V$, we
denote by $H^{l}(V)$ and $IH^{l}(V)$ its cohomology and intersection
cohomology groups ($l\in\mathbb Z$). Let $D_c^b(V)$ be the constructible derived category  
 of sheaves of $\mathbb Q$-vector spaces on $V$. For
a complex of sheaves $\mathcal F^{\bullet}\in D_c^b(V)$, we denote by
$\mathbb H^l(\mathcal F^{\bullet})$ its hypercohomology groups. Let
$IC^{\bullet}_{V}$ denotes the intersection cohomology complex of $V$. If
$V$ is nonsingular, we have $IC^{\bullet}_{V}\cong \mathbb
Q_V[\dim_{\mathbb C} V]$, where $\mathbb Q_V$ is the
constant sheaf $\mathbb Q$ on $V$. 
We denote by $D^{\leq 0}(V)$, $D^{\geq 0}(V)$ ($D^{\leq 0}$, $D^{\geq 0}$ when no risk of confusion arises)
the t-structure on $D_c^b(V)$ associated with the middle perversity
\cite{BBD}, 
\cite[\S 10]{KS}, and by Perv(V) the abelian sub-category of perverse sheaves on $V$.  

A
resolution  of $V$ is a
projective surjective map $\pi:\widetilde V\to V$, such that
$\widetilde V$ is smooth and connected, and such that there exists an open dense
subset $U\subseteq V$ for which $\pi$ induces an isomorphism
 $\pi^{-1}(U)\to U$.

\medskip
\subsection{}
Fix integers $i,j,k,l$ such that:
$$
0< i< k \leq j< l \quad {\text {\rm and}} \quad r:=k-i< l-j=:c
$$
(the assumption $k \leq j$ is made for the sake of simplicity, in order to avoid to distinguish two cases in any statement of the paper \cite[\S 4]{DGF2strata}).
Let $\mathbb G_k(\mathbb C^l)$ denote the {\it Grassmann variety of} $k$-{\it planes in
$\mathbb C^l$}. Let $F\subseteq \mathbb C^l$ denote a fixed 
$j$-dimensional subspace. Define
$$
\mathcal S:=\left\{V\in \mathbb G_k(\mathbb C^l): \dim V\cap
F\geq i\right\}.
$$
$\mathcal S$ is called  {\it a single condition  Schubert variety or a special special Schubert variety}
\cite[p. 328]{CGM}.

\medskip
Consider the map \cite[p. 328]{CGM}:
$$
\pi: \widetilde{\mathcal S}\to  \mathcal S,
$$
where
$$
\widetilde{\Sc}:=\left\{(W,V)\in \mathbb G_i(F)\times
\mathbb G_k(\mathbb C^l): W\subseteq V\right\},
\quad \pi(W,V)=V\quad{\text{and}}\quad f(W,V)=W.
$$
The map $\pi$ is a resolution  of $\Sc$. We
have: 
$$
{\text{Sing}}(\Sc)=\left\{V\in \mathbb G_k(\mathbb C^l):
\dim V\cap F>i\right\}.
$$
and a Whitney stratification:
\begin{equation}\label{flag}
\Delta_1\subset\Delta_2\subset\dots\subset\Delta_r\subset\Delta_{r+1}=\mathcal S,\quad
r:=k-i.
\end{equation}
For this reason, in what follows we are going to call $\Sc  $
a {\it  special Schubert variety with $r+1 $ strata}.

In the stratification (\ref{flag}), $\Delta_p$ is   defined as
$$\Delta_p:=\left\{V\in \mathbb G_k(\mathbb C^l):
\dim V\cap F\geq k-p+1=:i_p\right\}.$$ So each $\Delta_p$ is a special Schubert variety as well.
In particular $\Delta_1$ is smooth, and
${\text{Sing}}\,\Delta_p= \Delta_{p-1}$ for all
$p\in\{2,\dots,r+1\}$. 
The map $\pi$
induces an isomorphism 
$$\pi^{-1}(\Sc^0)\cong \mathcal{S}^0, 
\quad \mathcal{S}^0:=\mathcal{S}\backslash{\text{Sing}}(\mathcal{S}).$$ 
Furthermore, every restriction
$$
\pi^{-1}(\Delta_p^0)\to
\Delta_p^0, \quad \Delta_p^0:=\Delta_p \backslash{\text{Sing}}(\Delta_p)=\Delta_p \backslash \Delta_{p-1}
$$
is a smooth and proper fibration on $\Delta_p^0$, with fibre
\begin{equation}\label{Fiber}
F_p:=\pi^{-1}(x)\cong \mathbb G_i(\mathbb C^{i_p}), \quad \forall x\in\Delta_p^0.
\end{equation}
So the stratification
(\ref{flag})
 is a
{\it stratification of} $\tS$ {\it adapted to} $\pi$ \cite{CGM},
\cite{Williamson}.

\medskip
\subsection{}
Denote by $S$ the {\it tautological  bundle} on $\bG_i(F)$ and  consider the following short exact sequence of vector
bundles on $\bG_i(F)$:
$$0 \to  S \to \bCl \to \kc \to 0,
$$
where  $\mathbb C^l $ is meant to be the trivial bundle of rank $l$ on $\bG_i(F)$ and  $\kc$ the cokernel of the bundle map $S \to \mathbb C^l$ 
(observe that  $\kc$ is NOT the universal quotient bundle on $\bG_i(F)$). 
\par 
Obviously, the projection $f:\tS \to \bG_i(F)$ allows us to identify $\tS$ with $\bG_{k-i} (\kc)$, the  {\it Grassmannian of
subspaces of dimension} $k-i$ {\it of the bundle} $\kc$. Furthermore, the projection
\par\noindent
$f:\tS\cong \bG_{k-i} (\kc)\to \bG_i(F)$ is a smooth and   proper fibration on $\bG_i(F)$, with fibre $\bG_{k-i}(\bC ^{l-i})$.

Also $\tS^0:=\pi^{-1}(\Sc^0)$ surjects on $\bG_i(F)$ via $f$
and we have the following fibre square commutative diagram
$$
\begin{array}{ccccc}
\tS^0 &\hookrightarrow  & \tS  \\
\stackrel{f^0}{}\downarrow &  &\stackrel {f}{}\downarrow  \\
\bG_i(F) & =  & \bG_i(F) \\
\end{array}.
$$
The leftmost map
$f^0: \tS^0\to \bG_i(F)$ is a (non-proper) fibration on $\bG_i(F)$, with fibre an open set of  $\bG_{k-i}(\bC ^{l-i})$.

\medskip
\subsection{}
As we have previously observed, any stratum $\Delta_p\subset \Sc$ is in turn a {\it single condition Schubert variety} so we have
a resolution of singularities, for all $2\leq p\leq r+1$
$$
\pi_p: \widetilde{\Delta }_p\to  \Delta _p,
$$
where
$$
\widetilde{\Delta }_p:=\left\{(Z,V)\in \mathbb G_{i_p}(F)\times
\mathbb G_k(\mathbb C^l): Z\subseteq V\right\},
\quad \pi_p(Z,V)=V
\quad{\text{and}}\quad f_p(Z,V)=Z
$$
(of course we have put $\tD_{r+1}=\tS$ and $\pi_{r+1}=\pi$).
Similarly as above,  we denote by $S_p:=S_{p , \bG_{i_p}(F)}$ the  tautological  bundle on $\bG_{i_p}(F)$, which is a bundle of rank $i_p=k-p+1$. If we consider the following short exact sequence of vector
bundles on $\bG_{i_p}(F)$:
$$0 \to  S_p \to \bCl \to \kc_p \to 0,
$$
the projection $f_p:\tD_p \to \bG_{i_p}(F)$ allows us to identify $\tD_p$ with $\bG_{p-1} (\kc_p)$, the  Grassmannian of
subspaces of dimension $p-1$ of the bundle $\kc_p$ on $\bG_{i_p}(F)$. Furthermore, we have the following commutative diagram:
\begin{equation}\label{D1}
\begin{array}{ccccccc}
 \nc_{p}^0 & \stackrel{\tilde\pi_{p}}{\cong}  & \pi^{-1}(\Delta_p^0) & & \\
\cap & & \cap & & \\
\nc_{p} & \stackrel{\tilde\pi_{p}}{\longrightarrow} & \overline{\Delta}_p & \stackrel{\tilde\iota_{p}}{\hookrightarrow} & \tS \\
\stackrel {\rho_{p}}{}\downarrow& &\stackrel {\pi}{}\downarrow & & \stackrel {\pi}{}\downarrow & \\
\widetilde\Delta_p &\stackrel{\pi_p}{\longrightarrow} & \Delta_p &\stackrel{\iota_{p}}{\hookrightarrow} & \Sc\\
 \cup & & \cup & & \\
 \widetilde\Delta_p^0 & \stackrel{\pi_p}{\cong}  & \Delta_p^0 & & \\
\end{array}
\end{equation}
where 
\begin{itemize}
\item $\overline{\Delta}_p:=\pi^{-1}(\Delta_p)$; 
\item $$\nc_{p}:=\left\{(W,Z,V)\in \mathbb F(i, i_p, F)\times
\mathbb G_k(\mathbb C^l): Z\subseteq V\right\}\cong  \bG_i(f^*_p(S_p));
$$
\item $F(i, i_p, F)$ is the variety of partial $(i,i_p)$-flags of $F$
and $\bG_i(f^*_p(S_P))$ is the Grassmannian of
subspaces of dimension $i$ of the bundle $f^*_p(S_p)$ on $\widetilde\Delta_p$;
\item the map $\rho_p$ is the natural projection
$$
\rho_p:\bG_i(f^*_p(S_p) _{\widetilde\Delta_p}) \to \widetilde\Delta_p
$$
with fibres $\bG_{i}(\bC ^{i_p})$;
\item $\nc_{p}^0:=\rho_p^{-1}(\widetilde\Delta_p^0)=\bG_i(f^*_p(S_p)\mid _{\widetilde\Delta_p^0})$.
\end{itemize}

This shows that {\it the fibration of equation} (\ref{Fiber}) {\it can be identified with the natural projection}
\begin{equation}
\label{Fiberimproved}
\rho_p:\nc_{p}^0=\bG_i(f^*_p(S_p)\mid _{\widetilde\Delta_p^0}) \to \widetilde\Delta_p^0
\end{equation}

\begin{remark}
\label{correspondence} By definition of $\Delta_p^0$, we have that  $\dim V\cap F=i_p$, $\forall V\in \Delta_p^0$. So we have:
$$ \tilde\pi_{p}:\nc_{p}^0\leftrightarrow \pi^{-1}(\Delta_p^0), \quad \tilde\pi_{p}(W,Z,V)=(W,V), \quad{\text{and}} \quad \tilde\pi_{p}^{-1}(W,V)=(W,V\cap F, V). $$
\end{remark}

\medskip
\subsection{} 

More generally, for all pairs $(p,q)$ such that $q<p\leq r+1$, we have an inclusion of special Schubert varieties $\Delta_q \subset \Delta_p$. Hence  we can extend everything we have said in the previous paragraph. We have
the following commutative diagram  
\begin{equation}\label{D2}
\begin{array}{ccccccc}
 \widetilde \Delta_{pq}^0 & \stackrel{\tilde\pi_{pq}}{\cong}  & \Delta_{pq}^0  & & \\
\cap & & \cap & & \\
\widetilde{\Delta}_{pq} & \stackrel{\tilde\pi_{pq}}{\longrightarrow} & \Delta_{pq} & \stackrel{\tilde\iota_{pq}}{\hookrightarrow} & \widetilde\Delta_p \\
\stackrel {\rho_{pq}}{}\downarrow& &\stackrel {\pi_{pq}}{}\downarrow & & \stackrel {\pi_{p}}{}\downarrow & \\
\widetilde\Delta_q &\stackrel{\pi_q}{\longrightarrow} & \Delta_q &\stackrel{\iota_{pq}}{\hookrightarrow} & \Delta_p  \\
 \cup & & \cup & & \\
 \widetilde\Delta_q^0 & \stackrel{\pi_q}{\cong}  & \Delta_q^0 & & \\
\end{array}
\end{equation}
where
\begin{itemize}
\item $\Delta_{pq}:=\pi_p^{-1}(\Delta_q)$; 
\item $\Delta_{pq}^0:=\pi_p^{-1}(\Delta_q^0)$;
\item $$\widetilde{\Delta}_{pq}:=\left\{(W,Z,V)\in \mathbb F(i_p, i_q, F)\times
\mathbb G_k(\mathbb C^l): Z\subseteq V\right\}\cong  \bG_{i_p}(f^*_q(S_q));
$$
\item $\widetilde{\Delta}_{pq}^0:=\rho_{pq}^{-1}(\widetilde\Delta_q^0)=\bG_{i_p}(f^*_q(S_q)\mid _{\widetilde\Delta_q^0})$; 
\item  the fibration $\Delta_{pq}^0 \to \Delta_{q}^0$ can be identified with the natural projection
\begin{equation}
\label{Fiberimprovedpq}
\widetilde{\Delta}_{pq}^0=\bG_{i_p}(f^*_q(S_q)\mid _{\widetilde\Delta_q^0}) \to \widetilde\Delta_q^0.
\end{equation}
with fibres
$$F_{pq}:=\bG_{i_p}(\bC^{i_q}).$$
\end{itemize}

\begin{remark}
\label{correspondenceimproved}
\begin{enumerate}
\item Needless to say, the properties of commutative diagram (\ref{D1}) can be deduced from (\ref{D2}) by setting $(p,q)=(r+1,p)$.
\item By definition of $\Delta_q^0$, we have that  $\dim V\cap F=i_q$, $\forall V\in \Delta_q^0$. So we have:
$$ \tilde\pi_{pq}:\widetilde \Delta_{pq}^0 \leftrightarrow \Delta_{pq}^0, \quad \tilde\pi_{pq}(W,Z,V)=(W,V), \quad{\text{and}} \quad \tilde\pi_{pq}^{-1}(W,V)=(W,V\cap F, V). $$
\end{enumerate}
\end{remark}

\medskip
\subsection{}
The following identities may be checked by  simple, sometimes tedious, calculations

\medskip
\begin{itemize}
\item $m_p:=\dim\Delta_p= \dim\tD_p=\dim\bG_{p-1} (\kc_{ p, \bG_{i_p(F)}})  =(k+1-p)(j+p-k-1)+(p-1)(l-k)$
\item $\Delta_{r+1}=\Sc$, \,\,\, $\tD_{r+1}=\tS$, \,\,\,   $n:=\dim\Sc=\dim\tS= m_{r+1}$
\item $k_{pq}:=\dim F_{pq}=\dim\bG_{i_p}(\bC^{i_q})=i_p(i_q-i_p)=(p-q)(k+1-p)$
\item $m_p-m_q=\dim\Delta_p-\dim\Delta_{q}= (p-q)(c+k+2-p-q)$
\item $d_{pq}:=m_p-m_q-k_{pq}=\dim\Delta_p-\dim\Delta_{pq}=(p-q)(c+1-q)$
\item $\delta_{pq}:=k_{pq}-d_{pq}=(p-q)(k-c+q-p)$ 
\end{itemize}
\medskip
\begin{remark}
\label{convention} If $k_{pq}<d_{pq}$ for all $ q<p$ (that is to say if $k\leq c$), then the resolution $\pi_p$ is small for any $p$ and we have $IC^{\bullet}_{\Delta_p}\cong R\pi_p {_*}\bQ_{\tD_p}[m_p]$ in $D_c^b(\Delta_p)$.
In this case the Decomposition Theorem is trivial for $\pi_p$. Hence, {\it from now on we assume} $c<k$. 
\end{remark}

\medskip

\section{Computation of the perverse cohomology sheaves}

This section is devoted to the determination of the perverse cohomology sheaves in (\ref{decomposition}).
The starting point of our analysis stems from the remark that the semisimple local systems involved in the decomposition are \textit{constant sheaves supported in the smooth 
locus of the strata} $\Delta _p^0=\Delta _p \backslash \Delta_{p-1}$. In other words, the decomposition (\ref{decomposition})  takes the following form

\begin{equation}\label{decompositionimproved}
R\pi {_*}\bQ_{\tS}\cong \bigoplus_{p, r}IC_{\Delta_p}^{\bullet}[r]^{\oplus m_{pr}}
\end{equation}
for suitable multiplicities $m_{pr}\in \mathbb N_0$. As we are going to see, this follows just combining the Decomposition Theorem with  the global invariant cycle theorem, taking also into account of the
resolution of the strata described in \S 2.4.

To this end, consider a fibre square commutative diagram:
$$
\begin{array}{ccccccc}
\widetilde \Delta&\stackrel
{\jmath}{\hookrightarrow}&\widetilde X\\
\stackrel {\rho}{}\downarrow & &\stackrel
{\pi}{}\downarrow \\
\Delta&\stackrel
{\imath}{\hookrightarrow}&X\\
\end{array}
$$
where $\pi$ is a resolution, $\Delta $ is a locally closed subset of $X$ and $\rho$ is a smooth and proper fibration.
Combining the  Base Change property \cite[p. 322]{Iversen} with \cite[Remarks 1.5.1 and 1.6.2 (3)]{DeCMReview}, we get the following  isomorphisms (the second one is not uniquely determined), that will be extensively used in the sequel
\begin{equation}
\label{fibration} 
 {\imath}{^*}R\pi{_*}\bQ _{\widetilde X}\cong R\rho{_*}\bQ _{\widetilde{\Delta}}\cong \oplus_{i\in \bZ}R^i\rho{_*}\bQ _{\widetilde \Delta}[-i]
\quad{\text{in}}\quad D^b_c(\Delta).
\end{equation}

\medskip

\begin{remark}
\label{NewRemark}
\begin{enumerate}
\item By Deligne's theorem, $R^i\rho{_*}\bQ _{\widetilde \Delta}$ is a local system so equation (\ref{fibration}) implies that 
${\imath}{^*}R\pi{_*}\bQ _{\widetilde X}$ is a {\it direct sum of shifted local system in}   $D_c^b(\Delta)$. 
As in  \cite[Remark 1.5.1]{DeCMReview}, we also have:
\begin{equation}\label{compHR}
\sideset{_{}^{p}}{_{}^{m}}{\mathop \hc}({\imath}{^*}R\pi{_*}\bQ _{\widetilde X})\cong R^{m-\dim \Delta}\rho{_*}\bQ _{\widetilde \Delta}[\dim \Delta] \quad{\text and}\quad
\sideset{_{}^{p}}{_{\leq k}^{}}{\mathop \tau}({\imath}{^*}R\pi{_*}\bQ _{\widetilde X})\cong \oplus_{i=dim \Delta}^kR^i\rho{_*}\bQ _{\widetilde \Delta}[-i].
\end{equation}
\item Assume now that $\rho$ is a locally trivial fibration with fibre $F$ and that
 the restriction map $H^{\alpha}(\tD)\to
H^{\alpha}(F)$ is onto for all $\alpha\in\mathbb Z$.
Under this assumption, the global invariant cycle theorem \cite[\S 4.3.1]{VoisinII}, \cite[Theorem 1.2.4]{DeCMReview}: 
$$H^{\alpha}(F)^{\pi _1 (\Delta)}= Im (H^{\alpha}(\tD) \longrightarrow H^{\alpha}(F)),  $$
implies that  the fundamental group of $\Delta $ acts trivially on the cohomology of $F$. 
Hence, the splitting (\ref{compHR}) takes the following form:
\begin{equation}\label{senzaLH}
 R\rho{_*}\bQ _{\widetilde{\Delta}}\cong\oplus_{\alpha=0}^{2\dim F} H^{\alpha}(F)\otimes \mathbb Q_{\Delta}[-\alpha],
\, \, \,\, \sideset{_{}^{p}}{_{}^{m}}{\mathop \hc}({\imath}{^*}R\pi{_*}\bQ _{\widetilde X})\cong H^{m-\dim \Delta}(F)\otimes \bQ _{ \Delta}[\dim \Delta].
\end{equation}
\noindent
In other words {\it the  local system involved in} (\ref{compHR})  {\it are constant sheaves on} $\Delta$.
\item We  observe that the hypothesis of previous remark is satisfied by the resolutions described in \S 2. In the meanwhile, we fix some notations.
For all $\beta\in\mathbb Z$, set
$A^{\beta}_{pq}:=H^{\beta}(F_{pq})=H^{\beta}(\bG_{i_p}(\bC ^{i_q}))$, and
$a^{\beta}_{pq}:=\dim_{\mathbb Q} H^{\beta}(F_{pq})$. 
By \cite[Corollary 3.2.4]{Manivel}, $a^{\beta}_{pq}=0$ for $\beta $ odd and $A^{2\mid \lambda \mid}_{pq}$ has a canonical basis which is in one-to-one correspondence with all partitions $\lambda$ contained in a 
$(p-q)\times i_p$ rectangle. 

Recall from \S 2.5 that $F_{pq}$ coincides with any fibre of the fibration (\ref{Fiberimprovedpq}):  
$$\widetilde{\Delta}_{pq}^0=\bG_{i_p}(f^*_q(S_q)\mid _{\widetilde\Delta_q^0}) \to \widetilde\Delta_q^0\subset \bG_{q-1} (\kc_{q, \, \bG_{i_q}(F)})$$
and obviously we have $F_{pq}\subset \bG_{i_p}(F)$, as a Grassmann submanifold, hence $H^{\bullet}(F_{pq})$ is canonically contained in $H^{\bullet}(\bG_{i_p}(F))$ and
the hypothesis of remark  (2) is thus satisfied via pull-back from $\bG_{i_p}(F)$. 
\end{enumerate}
\end{remark} 

By Remark \ref{NewRemark}, 
the semisimple local systems involved in  (\ref{decomposition})  are constant sheaves supported in the smooth part of the strata,
hence the Decomposition Theorem implies that the derived direct image $R\pi {_*}\bQ_{\tS}$ splits according to (\ref{decompositionimproved}).
So we are left with the computation of the occurring multiplicities $m_{pr}$. This  will be achieved in Theorem \ref{PerverseCohomology} (by downward induction on $p$)
by means of a combinatoric argument based on dimension of the stalk cohomology groups
\begin{equation}
\label{dimstalk}
\hc^{\bullet}(R\pi {_*}\bQ_{\tS})_y \cong H^{\bullet}(\pi ^{-1}(y)) \quad{\text{and}} 
\end{equation}
$$\hc^{\bullet}(IC_{\Delta_p}^{\bullet})_y \cong H^{\bullet + \dim \Delta_p}(\xi_p ^{-1}(y)), $$
where $ \xi _p: \dc _p \to \Delta_p$ denotes the small resolution of the stratum $\Delta_p$ (compare with the proof of (\ref{ingredients})).

On that basis, the proof of Theorem \ref{PerverseCohomology}
reduces to a dimensional counting (compare with formulas (\ref{check}) and (\ref{thesis})).
We record the necessary ingredients in the following Lemma.

\begin{lemma}\label{ingredients} For any pair $q< p$ and for any $y\in \Delta_q^0$, we have:
$$\dim (\hc^{\beta}(R\,\pi_{p *}\bQ_{\tD _p})_y) =  \dim( H^{\beta}(\bG_{i_p}(\bC ^{i_q})))=: a^{\beta}_{pq} \quad \quad{\text{and}} $$
$$\dim (\hc^{\beta}(IC_{\Delta_p}^{\bullet})_y) =  \dim( H^{\beta+ \dim \Delta_p}(\bG_{p-q}(\mathbb C^{c-q+1})))=:b^{\beta}_{pq}.$$
\end{lemma}
\begin{proof}
Recall from \S 2.5 that the morphism $\pi_p^{-1}(\Delta _q^0) \to \Delta_q^0$  is a smooth fibration with
fibre $F_{pq}=\bG_{i_p}(\bC ^{i_q})$. Then the first formulas follows just combining  (\ref{fibration}) and (\ref{senzaLH}).

As for the second, we need to define the small resolution of the stratum $\Delta _p$.
Consider the map:
$$
\xi_p: \dc _p \to \Delta_p,
$$
where
$$
\dc_p:=\left\{(V,U)\in \mathbb G_k(\bC^l)\times
\mathbb G_{k+j-i_p}(\mathbb C^l): V+F\subseteq U\right\},
\quad \xi_p(V,U)=V,
$$
which is another standard resolution of $\Delta_p$. Similarly as in \S 2.2,
every restriction
$$
\xi^{-1}_p(\Delta_q^0)\to
\Delta_q^0, 
$$
is a smooth and proper fibration on $\Delta_q^0$, with fibre
$$
G_{pq}:=\xi_p ^{-1}(V) \cong \left\{U\in 
\mathbb G_{k+j-i_p}(\mathbb C^l): V+F\subseteq U\right\}  \cong \mathbb G_{p-q}(\mathbb C^{c-q+1}), \quad \forall V\in\Delta_q^0.
$$
Since $\overline{k}_{pq}:= \dim G_{pq}=(p-q)(c-p+1)$ and $m_p-m_q=\dim\Delta_p-\dim\Delta_{q}= (p-q)(c+k+2-p-q)$ (compare with \S 2.6),
then $\overline{k}_{pq}=(p-q)(c-p+1)<m_p-m_q- \overline{k}_{pq}=(p-q) (k-q+1)$, because in \ref{convention} we assumed $c<k$.
So, {\it  the resolution} $\xi_p$ {\it is small} and we have 
\begin{equation}
\label{icsmall}
IC^{\bullet}_{\Delta_p}\cong R\xi_p {_*}\bQ_{\dc _p}[m_p]\quad \text{\rm in }\quad D_c^b(\Delta_p).
\end{equation}
Finally, by (\ref{fibration}) and (\ref{senzaLH}) we have
$$\hc^{\beta}(IC_{\Delta_p}^{\bullet})_y\cong H^{\beta+ \dim \Delta_p}(\xi_p ^{-1}(y))= H^{\beta+ \dim \Delta_p}(G_{pq})\cong H^{\beta+ \dim \Delta_p}(\bG_{p-q}(\mathbb C^{c-q+1}))
$$
and we are done.
\end{proof}

\begin{remark}\label{iclocallytrivial}
By (\ref{icsmall}), we have 
\begin{equation}
IC^{\bullet}_{\Delta_p}\mid_{\Delta^0_q}\cong  R\xi_p {_*}\bQ_{\dc _p}[m_p]\mid_{\Delta^0_q}
\cong \oplus_{i\in \bZ}R^i\xi_p{_*}\bQ _{\dc _p}[m_p-i]\mid_{\Delta^0_q}
\end{equation}
and each $R^i\xi_p{_*}\bQ _{\dc _p}\mid_{\Delta^0_q}$ {\it is a  local system with fibres}  $B^i_{pq}:=H^i(G_{pq})=H^i(\mathbb G_{p-q}(\mathbb C^{c-q+1}))$.
\end{remark}

\begin{notations}\label{Dpq}
For future reference, we now define an useful subspace $D^{2\mid \lambda \mid}_{pq}\subset A^{2\mid \lambda \mid}_{pq}$ (cf. Remark \ref{NewRemark} (3)). If $i_p<c+1-q$ (i.e. if $\delta_{pq}<0$) then we set $D^{2\mid \lambda \mid}_{pq}=0$. On the contrary, 
for $\mid \lambda \mid \leq (p-q)\times (k-c+q-p)=\delta_{pq} $ we denote by $D^{2\mid \lambda \mid}_{pq}\subset A^{2\mid \lambda \mid}_{pq}$ 
the subspace spanned by partitions $\lambda =(\lambda_1 , \dots ,\lambda _{p-q} )$ such that
$\lambda _{i}\leq (k-c+q-p)$, $1\leq i \leq p-q$. Observe that $D^{2\mid \lambda \mid}_{pq}$ has a canonical basis which is in one-to-one correspondence with all partitions $\lambda$ contained in a 
$(p-q)\times (k-c+q-p)$ rectangle. In other words, $D^{2\mid \lambda \mid}_{pq}$ can be identified the cohomology of a Grassmann submanifold 
\begin{equation}
\label{subgrassmannian}
D^{2\mid \lambda \mid}_{pq} \cong H^{2\mid \lambda \mid}(T_{pq}), \quad T_{pq}:=\bG _{p-q}(\bC ^{k-c})\subset\bG _{p-q}(\bC ^{i_q})= F_{pq}.
\end{equation}
Since
$\dim T_{pq}=(p-q)\cdot(k-c+q-p)= \delta_{pq}$ (compare with \S 2.6), by Hard Lefschetz, we have 
\begin{equation}\label{Hard}
D^{\delta_{pq} -\alpha}_{pq}\cong D^{\delta_{pq} +\alpha}_{pq}.
\end{equation}
We set $d^{\alpha}_{pq}:= \dim D^{\alpha}_{pq}$.
\end{notations}

\medskip

\begin{theorem}
\label{PerverseCohomology}  
We have
$$\sideset{_{}^{p}}{_{}^{i}}{\mathop \hc}(R\,\pi_{p *}\mathbb Q_{\tD_{p}}[m_{p}]) \cong  \bigoplus_{q=0}^p   D_{pq }^{\delta_{pq} + i}\otimes_{\mathbb Q}
R{\iota_{pq}}_* IC_{\Delta_q}^{\bullet}.
$$
\end{theorem}

\begin{proof}
We need  to prove that 
$$\sideset{_{}^{p}}{_{}^{i}}{\mathop \hc}(R\,\pi_{p *}\mathbb Q_{\tD_{p}}[m_{p}])_{\Delta_q} \cong     D_{pq }^{\delta_{pq} + i}\otimes_{\mathbb Q}
R{\iota_{pq}}_* IC_{\Delta_q}^{\bullet},
$$
where 
$\sideset{_{}^{p}}{_{}^{i}}{\mathop \hc}(R\,\pi_{p *}\mathbb Q_{\tD_{p}}[m_{p}])_{\Delta_q}$ denotes the  ($\Delta_q$)-summand in the decomposition by supports of 
$\sideset{_{}^{p}}{_{}^{i}}{\mathop \hc}(R\,\pi_{p *}\mathbb Q_{\tD_{p}}[m_{p}])$ \cite[\S 1.1]{DeC}.
The last statement is a consequence of the following 

{\it Claim:
In} $Perv(\Delta_{p}\backslash \Delta_l)$ {\it we have the isomorphisms}  
$$\sideset{_{}^{p}}{_{}^{i}}{\mathop \hc}(R\,\pi_{p *}\mathbb Q_{\tD_{p}}[m_{p}])_{\Delta_q\backslash \Delta_l} \cong     D_{pq }^{\delta_{pq} + i}\otimes_{\mathbb Q}
R{\iota_{pq}}_* IC_{\Delta_q\backslash \Delta_l}^{\bullet},
$$
{\it for all} $0\leq l< q\leq p$.

If $q-l\geq 2$,  the perverse sheaf 
$\sideset{_{}^{p}}{_{}^{i}}{\mathop \hc}(R\,\pi_{p *}\mathbb Q_{\tD_{p}}[m_{p}])_{\Delta_q\backslash \Delta_l}$ is the {\it intermediate extension} 
of 
$\sideset{_{}^{p}}{_{}^{i}}{\mathop \hc}(R\,\pi_{p *}\mathbb Q_{\tD_{p}}[m_{p}])_{\Delta_q\backslash \Delta_{l+1}}$  to $\Delta_q\backslash \Delta_l$
\cite[Proposition 2.1.9]{BBD}, \cite[Theorem 5.2.12, Theorem 5.4.1]{Dimca2}, \cite[\S 2.7]{DeCMReview}. 
So we can assume $l+1=q\leq p$. 

The proof proceeds now by induction on $p-q$. Since the  starting case $q=p$ is trivial,
we can assume $l+1=q<p$, and 
we only need to prove:
\begin{equation}\label{toprove}
\sideset{_{}^{p}}{_{}^{i}}{\mathop \hc}(R\,\pi_{p *}\mathbb Q_{\tD_{p}}[m_{p}])_{\Delta_q^0} \cong     D_{pq }^{\delta_{pq} + i}\otimes
R{\iota_{pq}}_* \bQ_{\Delta_q^0}[m_q].
\end{equation}

By Remark (\ref{NewRemark}), we have
\begin{equation}
\label{LHThm}
R\,\pi_{p *}\mathbb Q_{\tD_{p}}[m_{p}]\mid_{\Delta_q^0}\cong
\oplus_{\alpha=0}^{2dimF_{pq}} A^{\alpha}_{pq}\otimes \mathbb Q_{\Delta_q^0}[-\alpha+m_p]
\end{equation}

On the other hand, the Decomposition Theorem (with proper base change) says that 
\begin{equation}\label{DecThmProof}
R\,\pi_{p *}\mathbb Q_{\tD_{p}}[m_{p}]\mid_{\Delta_q^0}\cong
\oplus_{i\in \bZ}\sideset{_{}^{p}}{_{}^{i}}{\mathop \hc}(R\,\pi_{p *}\mathbb Q_{\tD_{p}}[m_{p}])_{\Delta_q^0}[-i]
 \bigoplus_{i\in \bZ , \, r> q}^{r\leq p}\sideset{_{}^{p}}{_{}^{i}}{\mathop \hc}(R\,\pi_{p *}\mathbb Q_{\tD_{p}}[m_{p}])_{\Delta_r}[-i]\mid_{\Delta_q^0}.
\end{equation}
Furthermore,  arguing by induction and recalling that ${\iota_{pr}}^*\circ {\iota_{pr}}_*=1$ \cite[p. 110]{Iversen},
we get 
\par\noindent
($\forall$ $q<r<p$)
\begin{equation}\label{InductionIC}
\sideset{_{}^{p}}{_{}^{i}}{\mathop \hc}(R\,\pi_{p *}\mathbb Q_{\tD_{p}}[m_{p}])_{\Delta_r\backslash \Delta_{l}}\mid_{\Delta_q^0}\cong 
 D_{pr }^{\delta_{pr} + i}\otimes
R{\iota_{pr}}_* IC_{\Delta_r}^{\bullet}\mid_{\Delta_q^0}\cong 
D_{pr }^{\delta_{pr} + i}\otimes
(\oplus_{\alpha} B^{\alpha+m_r}_{rq} \otimes \bQ _{\Delta^0_q}[-\alpha]).
\end{equation}

Combining  (\ref{LHThm}), (\ref{DecThmProof}) and (\ref{InductionIC})  and taking account of (\ref{compHR})
we infer
\begin{equation}\label{beforeconclusion}
A^{j+m_p}_{pq}\otimes \mathbb Q_{\Delta^0_q}\cong \sideset{_{}^{p}}{_{}^{j+m_q}}{\mathop \hc}(R\,\pi_{p *}\mathbb Q_{\tD_{p}}[m_{p}])_{\Delta_q^0}\oplus
R^{j}(IC^{\bullet}_{\Delta_p} \mid_{\Delta_q^0}) \oplus
\bigoplus_{ r> q,\, \beta+\gamma =j}^{r< p}D_{pr }^{\delta_{pr} + \beta}\otimes  B^{\gamma+m_r}_{rq} \otimes \bQ _{\Delta^0_q}.
\end{equation}
By  Hard Lefschetz and (\ref{Hard}), 
it suffices  to prove (\ref{toprove}) for every $i\geq 0$.
As $IC^{\bullet}_{\Delta_p}$ satisfies the support condition, 
$R^{i}(IC^{\bullet}_{\Delta_p} \mid_{\Delta_q^0})=0$, for $i\geq -m_q$, and (\ref{beforeconclusion}) become

\begin{equation}\label{conclusion}
A^{j+m_p}_{pq}\otimes \mathbb Q_{\Delta^0_q}\cong \sideset{_{}^{p}}{_{}^{j+m_q}}{\mathop \hc}(R\,\pi_{p *}\mathbb Q_{\tD_{p}}[m_{p}])_{\Delta_q^0}
 \oplus
\bigoplus_{ r> q,\, \beta+\gamma =j}^{r< p}D_{pr }^{\delta_{pr} + \beta}\otimes  B^{\gamma+m_r}_{rq} \otimes \bQ _{\Delta^0_q}
\end{equation}
as  trivial local systems on $\Delta^0_q$.
So the only thing we need to check is that
\begin{equation}
\label{check}
a^{j+m_p}_{pq} =d_{pq }^{\delta_{pq} + j+m_q} +\sum_{  r= q+1}^{p-1}\sum_{\, \beta+\gamma =j}d_{pr }^{\delta_{pr} + \beta}\cdot b^{\gamma+m_r}_{rq},
\end{equation} 
 for every $j\geq -m_q$.
Recalling that $m_p-m_q-\delta_{pq}=2d_{pq}$, it is enough to prove 
\begin{equation}\label{thesis}
a^{s}_{pq} =d_{pq }^{s-2d_{pq}} +\sum_{  r= q+1}^{p-1}\sum_{ \, \beta+\gamma =s}d_{pr }^{ \beta-2d_{pr}}\cdot b^{\gamma}_{rq}
\end{equation}
for every $s\geq m_p-m_q$ ($\geq 2d_{pq}$).

To this end, we define a set of linear injective maps $\tilde{\gamma}^q_{pr}:D_{pr }^{ \beta}\otimes B_{rq}^{\gamma}
\to A^{\beta + \gamma + 2d_{pr}}_{pq}$, for any $q<r<p$.
If $\sigma_{\lambda}\in B_{rq}^{\gamma}$ and $\sigma_{\mu}\in D^{\beta}_{pr}$, with $\lambda:=(\lambda_1, \dots , \lambda_{r-q})$ and $\mu:=(\mu_1, \dots , \mu_{p-r})$  we define
$$
\tilde{\gamma}^q_{pr}(\sigma_{\mu}\otimes \sigma_{\lambda} ):= \sigma_{\nu}, \,\,\,\,\,\, \nu=(\nu_1, \dots , \nu_{p-q}):=(\mu_1+c+1-r, \dots , \mu_{p-r}+c+1-r, \lambda_1 , \dots , \lambda_{r-q}).
$$

Taking into account the Definitions of $D^{\alpha}_{pr}$ and $B_{rq}^{\beta}$ (compare with Notations \ref{locsys}),   we find that
\begin{equation}
\label{gammalpq}
{\text{\it the image \quad $\tilde{\gamma}^q_{pr}(D_{pr }^{ \beta}\otimes B_{rq}^{\gamma})\subset A^{\beta + \gamma + 2d_{pr}}_{pq}$ is spanned by the cohomology classes \,\,   
$\sigma_{\nu} $    }}
\end{equation}
$$
{\text{\it such that  \quad    $\nu_1\geq c+1-r$, $\dots $, $\nu_{p-r}\geq c+1-r$, \quad $\nu_{p-r+1}\leq c+1-r$, $\dots $, $\nu_{p-q}\leq c+1-r$}}. 
$$
Similarly, we define
$\tilde{\gamma}^0_{pq}:D_{pq }^{ \beta}\to A^{\beta  + 2d_{pq}}_{pq}:$ 
$$
 \tilde{\gamma}^0_{pq}(\sigma_{\mu} ):= \sigma_{\nu}, \,\,\, \nu:=(\mu_1+c+1-q, \dots , \mu_{p-q}+c+1-q)
$$
and we have
\begin{equation}
\label{gamma0pq}
{\text{\it the image \quad $\tilde{\gamma}^0_{pq}(D_{pq }^{ \beta})\subset A^{\beta  + 2d_{pq}}_{pq}$ is spanned by the cohomology classes \,\,  
$\sigma_{\nu} $    }}
\end{equation}
$$
{\text{\it such that  \quad    $\nu_1\geq c+1-q$, $\dots $, $\nu_{p-q}\geq c+1-q$. }}
$$
Combining (\ref{gammalpq}) with (\ref{gamma0pq}) we conclude that
\begin{equation}
\label{gammaAhigh}
A^{s}_{pq} \cong \bigoplus_{r=q+1}^{p-1}\bigoplus_{ \beta + \gamma =s} \left( \tilde{\gamma}^q_{pr}(D_{pr }^{ \beta-2d_{pr}}\otimes B_{rq}^{\gamma}) \right) \oplus \tilde{\gamma}^0_{pq}
(D_{pq }^{s-2d_{pq}}), \quad {\text {\rm  if}} \quad s \geq 2\overline{k}_{pq}+1
\end{equation}
and
\begin{equation}
\label{gammaAlow}
A^{s}_{pq} \cong \bigoplus_{r=q+1}^{p-1}\bigoplus_{ \beta + \gamma =s} \left( \tilde{\gamma}^q_{pr}(D_{pr }^{ \beta-2d_{pr}}\otimes B_{rq}^{\gamma}) \right) \oplus \tilde{\gamma}^0_{pq}
(D_{pq }^{s-2d_{pq}})\oplus B^{s}_{pq}, \quad {\text {\rm  if}} \quad s \leq 2\overline{k}_{pq}
\end{equation}
Then (\ref{thesis}) follows from (\ref{gammaAhigh}), because $\overline{k}_{pq}< d_{pq}$, and we are done.
\end{proof}

\medskip

\begin{remark}
 In Proposition \ref{gamma} and Theorem \ref{LocStudy} we give a geometric interpretation of the linear maps $\tilde{\gamma}^q_{pr}$, introduced in the proof of Theorem \ref{PerverseCohomology}, and of
the splittings (\ref{gammaAhigh}) and (\ref{gammaAlow}).
\end{remark}

\medskip

\section{The normal bundle of $\tDpq^0\subset \tD_p$}

In this section we are going to compute the normal bundle of the smooth subvariety  $\tDpq^0$ of $ \tD_p$ (compare with \S 2.5).
In \S  2.4 we denoted by  $S_q$ the tautological bundle of $\bG_{i_q}(F)$ (whose rank is $i_q=k-q+1$) and by 
$\kc_q$ the cokernel of the bundle map 
$S_q \to \mathbb C^l $ on $  \bG_{i_p}(F)$. Similarly, we denote by
$S_k$ the the tautological bundle   of $\bG_{k}(\bC^l)$.

By abuse of notation, we still denote by $S_p$, $S_q$, $S_k$ and $\mathcal K_q$  their  pull-back on $\tDpq $.
 
By Remark \ref{correspondenceimproved}, the bundles $S_p$, $S_q$, $S_k$ and $\mathcal K_q$ are also well defined  on $\tDpq^0\cong \Delta_{pq}^0$. Furthermore, again by
Remark \ref{correspondenceimproved}, we have
$$S_q\equiv S_k \cap F \quad{\text{in}} \quad  \Delta_{pq}^0,$$ 
So, also 
$S_k + F$ and $\mathcal J_{pq} :=\mathbb C^l\slash (S_{k} + F)$ are well defined vector bundles over
$\widetilde{\Delta }_{pq}^0\simeq \Delta _{pq}^0$. We are now able to compute the normal bundle of $\tDpq^0$ in $\tD_p$:

\medskip
\begin{proposition}\label{normalbundle} The normal bundle $\mathcal N_{pq}$  of the smooth subvariety $\tDpq^0$ in $\tD_p$ is 
$$\mathcal N_{pq}\cong {\text{Hom}}(S_q\slash S_p, \mathcal J_{pq}).$$
\end{proposition}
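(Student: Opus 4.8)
The plan is to realise $\tDpq^0$, as a subvariety of $\tD_p$, as the smooth open stratum of an explicit degeneracy locus, and then to read off $\mathcal N_{pq}$ from the classical normal bundle formula for such strata.

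First I would set up, on $\tD_p=\{(W,V)\in\bG_{i_p}(F)\times\bG_k(\bC^l):W\subseteq V\}$, the tautological bundles $S_p$ (of rank $i_p$, fibre $W$) and $S_k$ (of rank $k$, fibre $V$) pulled back from the two factors. The incidence condition, together with $W\subseteq F$, gives inclusions of subbundles $S_p\subseteq S_k$ and $S_p\subseteq F$ inside the trivial bundle $\bC^l$ on $\tD_p$, hence a well defined morphism of vector bundles
$$
\sigma\colon\ S_k/S_p\ \longrightarrow\ \bC^l/F
$$
of ranks $p-1$ and $c=l-j$ respectively. Since $W\subseteq V\cap F$ holds everywhere on $\tD_p$, one computes $\ker\sigma_{(W,V)}=(V\cap F)/W$, so that $\operatorname{rank}\sigma_{(W,V)}=k-\dim(V\cap F)$; therefore the condition $\dim(V\cap F)\geq i_q$ defining $\Delta_q$ pulls back under $\pi_p$ to $\operatorname{rank}\sigma\leq q-1$. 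In other words, $\Delta_{pq}=\pi_p^{-1}(\Delta_q)$ coincides with the degeneracy locus $\{\operatorname{rank}\sigma\leq q-1\}$, and $\tDpq^0\cong\Delta_{pq}^0=\pi_p^{-1}(\Delta_q^0)$ is exactly the open stratum $\{\operatorname{rank}\sigma=q-1\}$, open and dense in $\Delta_{pq}$.

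Next I would invoke the classical description of the smooth stratum of a degeneracy locus: for a morphism $\sigma\colon E\to G$ of vector bundles on a smooth variety, if $\{\operatorname{rank}\sigma\leq r\}$ is smooth of the expected codimension $(\operatorname{rank}E-r)(\operatorname{rank}G-r)$ along its open stratum $\{\operatorname{rank}\sigma=r\}$, then the normal bundle of that stratum in the ambient variety is canonically $\operatorname{Hom}(\ker\sigma,\operatorname{coker}\sigma)$ (locally one trivialises $E$ and $G$ so that the locus becomes the zero scheme of the Schur-complement block, which is a map to $\operatorname{Hom}(\ker\sigma,\operatorname{coker}\sigma)$, and smoothness of the expected dimension forces its differential to be surjective along the stratum). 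In our situation $E=S_k/S_p$, $G=\bC^l/F$ and $r=q-1$, so the expected codimension is $\big((p-1)-(q-1)\big)\big(c-(q-1)\big)=d_{pq}$, which by the dimension formulas of \S 2 is the codimension of $\tDpq^0$ in $\tD_p$; together with the smoothness of $\tDpq^0$ (diagram (\ref{D2})) this yields a canonical isomorphism
$$
\mathcal N_{pq}\ \cong\ \operatorname{Hom}\big(\ker\sigma,\ \operatorname{coker}\sigma\big)\big|_{\tDpq^0}.
$$
It then remains to identify the two factors on $\tDpq^0\cong\Delta_{pq}^0$. Using the relation $S_q\equiv S_k\cap F$ on $\Delta_{pq}^0$ recalled above, one gets $\ker\sigma=(S_k\cap F)/S_p=S_q/S_p$; and since $\operatorname{im}\sigma=(S_k+F)/F$, one gets $\operatorname{coker}\sigma=\bC^l/(S_k+F)=\mathcal J_{pq}$. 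The Proposition follows.

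The step I expect to require the most care is the passage from $\Delta_q$ to the degeneracy locus, i.e.\ checking that near $\tDpq^0$ the locus $\Delta_{pq}=\pi_p^{-1}(\Delta_q)$ really is the reduced, expected-codimension degeneracy locus of $\sigma$, so that the normal bundle formula applies. The subtlety is that $\sigma$ need not be transverse to the strata of the generic determinantal variety; what rescues the argument is that the smoothness of $\tDpq^0$ is already known (diagram (\ref{D2})) and its codimension is exactly the expected one (\S 2), which forces the local defining equations of the degeneracy locus to have surjective differential along $\tDpq^0$, hence the normal space there to be $\operatorname{Hom}(\ker\sigma,\operatorname{coker}\sigma)$ at each point. (A more computational alternative, exploiting that $\tDpq^0\hookrightarrow\tD_p$ is a morphism over $\bG_{i_p}(F)$, is to identify $\mathcal N_{pq}$ with the cokernel of the induced map of relative tangent bundles over $\bG_{i_p}(F)$; this is self-contained but requires unwinding several Grassmann-bundle tangent sequences.)
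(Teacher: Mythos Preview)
Your argument is correct and takes a genuinely different route from the paper's. The paper works relatively over $\bG_{i_p}(F)$: since both $\tDpq^0$ and $\tD_p$ fiber over this base, the normal bundle coincides with the relative normal bundle; the paper then writes $T_{\tD_p/\bG_{i_p}(F)}\cong\operatorname{Hom}(S_k/S_p,\bC^l/S_k)$ and fits $T_{\Delta_{pq}^0/\bG_{i_p}(F)}$ into a short exact sequence, and extracts $\mathcal N_{pq}$ by a diagram chase through the snake lemma, ending with the sequence $0\to\operatorname{Hom}(S_q/S_p,F/S_q)\to\operatorname{Hom}(S_q/S_p,\bC^l/S_k)\to\mathcal N_{pq}\to 0$. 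This is exactly the ``more computational alternative'' you sketch in your closing parenthesis. Your degeneracy-locus approach is cleaner and more conceptual: once one sees that $\Delta_{pq}\subset\tD_p$ is the locus $\{\operatorname{rank}\sigma\leq q-1\}$ for $\sigma\colon S_k/S_p\to\bC^l/F$, the identification $\mathcal N_{pq}\cong\operatorname{Hom}(\ker\sigma,\operatorname{coker}\sigma)$ is immediate and the kernel and cokernel are read off directly. The price you pay is the need to justify the normal-bundle formula for degeneracy strata (smoothness plus expected codimension forcing surjectivity of the Schur-complement differential), which you handle correctly by citing the known smoothness and the codimension $d_{pq}$ from \S 2; the paper's approach avoids this by being entirely elementary with exact sequences.
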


\begin{proof}
Since both $\Delta _{pq}^0 $ and $\widetilde{\Delta }_{p}$ fiber over $\bG:= \bG _{i_p}(F)$, the normal bundle $\mathcal N_{pq}$ of $\Delta _{pq}^0 \subset \widetilde{\Delta }_{p}$
coincides with the relative normal bundle with respect to $\bG$.

The relative tangent bundles of $\widetilde{\Delta }_{p}$ is \cite[p. 435, B.5.8]{FultonIT}:
$$
 T_{\widetilde {\Delta }_{p}\slash \mathbb G}\cong
{\text{Hom}}(S_k\slash S_{p}, \bC ^l\slash S_{k}),
$$
and the relative tangent bundles of $\Delta _{pq}^0 $ fits in the following short exact sequence:

$$
0\to {\text{Hom}}(S_k\slash S_{q}, \bC ^l\slash S_{k}) \to T_{\Delta _{pq}^0 \slash \mathbb G} \to {\text{Hom}}(S_{q}\slash S_{p}, F\slash S_{q}) \to 0.
$$
We have the following commutative diagram of vector bundles on $\bG$:

$$
\begin{array}{ccccccccc}
  &     &  0                                                &     &              0                      & \to & {\text{Hom}}(S_{q}\slash S_{p}, F\slash S_{q}) &  \to    &  \dots  \\
  &     &  \downarrow                                       &     &              \downarrow             &     &         \updownarrow                           &     &   \\
0 & \to & {\text{Hom}}(S_k\slash S_{q}, \bC ^l\slash S_{k}) & \to & T_{\Delta _{pq}^0 \slash \mathbb G} & \to & {\text{Hom}}(S_{q}\slash S_{p}, F\slash S_{q}) & \to & 0 \\
 &     &  \downarrow                                       &     &              \downarrow             &     &         \downarrow                           &     &   \\
0 & \to & {\text{Hom}}(S_k\slash S_{p}, \bC ^l\slash S_{k}) & \rightarrow & T_{\widetilde {\Delta }_{p}\slash \mathbb G}\mid_{\Delta _{pq}^0 } & \to & 0 &  &  \\
&     &  \downarrow                                       &     &              \downarrow             &     &         \downarrow                           &     &   \\
\dots  & \to & {\text{Hom}}(S_q\slash S_{p}, \bC ^l\slash S_{k}) & \to & \nc_{pq} & \to & 0 &  &  \\
\end{array}
$$     
where the leftmost column arise by applying ${\text{Hom}}(\cdot , \bC ^l\slash S_{k})$ to 
$$
0\to S_{q}\slash S_{p} \to S_k\slash S_{p} \to S_k\slash S_{q} \to 0.
$$
So we find 
$$
0\to {\text{Hom}}(S_{q}\slash S_{p}, F\slash S_{q}) \to {\text{Hom}}(S_{q}\slash S_{p}, \bC ^l \slash S_{k})  \to \nc_{pq} \to 0.
$$
and conclude in view of 
$$0 \to F\slash S_{q} \to  \bC ^l \slash S_{k} \to  \mathbb C^l\slash (S_{k} + F)= \mathcal J_{pq}\to 0.$$
\end{proof}

On the general fiber $F_{pq}=\bG_{i_p}(\bC^{i_q})$ of
the fibration $\pi_{pq}: \Delta_{pq}^0 \to \Delta_{q}^0$ (compare with (\ref{Fiberimprovedpq})),
both $S_q$ and $\mathcal J_{pq}$  are constant vector bundles with ranks $i_q$ and  $c+1-q$ respectively.
Hence, Proposition \ref{normalbundle} implies immediately

\begin{corollary}\label{cupc} Let us denote by $\qc_{\bG_{i_p}(\bC^{i_q})}$ the universal quotient bundle of $F_{pq}=\bG_{i_p}(\bC^{i_q})$ and by
$c_{pq}:=c_{p-q}(\qc_{\bG_{i_p}(\bC^{i_q})})$ the top Chern class of $\qc_{\bG_{i_p}(\bC^{i_q})}$.
Then we have: 
\begin{enumerate}
\item $$\mathcal N_{pq}\mid_{F_{pq}}\cong \qc_{\bG_{i_p}(\bC^{i_q})}^{\vee}\otimes \bC^{c+1-q}.$$
\item The top Chern class of $\mathcal N_{pq}\mid_{F_{pq}}$ is
$$c_{top}(\nc_{pq}\mid_{F_{pq}})=((-1)^{p-q}c_{p-q}(\qc_{\bG_{i_p}(\bC^{i_q})}))^{c+1-q}=\pm c_{pq}^{c+1-q}.$$
\end{enumerate}
\end{corollary}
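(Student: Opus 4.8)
The plan is to deduce Corollary \ref{cupc} directly from Proposition \ref{normalbundle} by restricting the bundle isomorphism to a fixed general fiber $F_{pq}=\bG_{i_p}(\bC^{i_q})$ of the fibration $\pi_{pq}:\Delta_{pq}^0\to\Delta_q^0$, and then performing a routine Chern class computation. First I would fix a point $x\in\Delta_q^0$; over such a point we have $\dim V\cap F=i_q$ by Remark \ref{correspondenceimproved}(2), so the fiber $\pi_{pq}^{-1}(x)$ is identified with the Grassmannian $\bG_{i_p}(\bC^{i_q})$ of $i_p$-dimensional subspaces of the fixed $i_q$-dimensional space $V\cap F$. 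On this fiber, $S_q$ restricts to the constant bundle $\bC^{i_q}$ (the fixed space $V\cap F$), and $\jc_{pq}=\bC^l/(S_k+F)$ restricts to the constant bundle $\bC^{c+1-q}$ (since $\dim(S_k+F)=\dim S_k+\dim F-\dim(S_k\cap F)=k+j-i_q=k+j-(k-q+1)=j+q-1$, hence $l-(j+q-1)=c+1-q$ using $c=l-j$). The remaining quotient $S_q/S_p$ restricts on $F_{pq}$ to the tautological subbundle $\bC^{i_q}/S_{p}$, whose dual is precisely the universal quotient bundle $\qc_{\bG_{i_p}(\bC^{i_q})}$ up to the identification of the quotient of the trivial bundle by the tautological subbundle.

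The first assertion then follows from Proposition \ref{normalbundle}: restricting $\mathcal N_{pq}\cong{\text{Hom}}(S_q/S_p,\jc_{pq})$ to $F_{pq}$ gives
$$\mathcal N_{pq}\mid_{F_{pq}}\cong {\text{Hom}}(S_q/S_p\mid_{F_{pq}},\bC^{c+1-q})\cong (S_q/S_p\mid_{F_{pq}})^{\vee}\otimes\bC^{c+1-q}\cong \qc_{\bG_{i_p}(\bC^{i_q})}^{\vee}\otimes\bC^{c+1-q},$$
where the last isomorphism uses the identification of $(S_q/S_p)\mid_{F_{pq}}$ with the dual of the universal quotient bundle. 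For the second assertion, I would use the standard formula for the Chern classes of a tensor product with a trivial bundle of rank $m=c+1-q$: for a bundle $E$ of rank $p-q$ (the rank of $\qc_{\bG_{i_p}(\bC^{i_q})}$, equal to $i_q-i_p=p-q$) one has $c_{top}(E^{\vee}\otimes\bC^m)=c_{\text{top}}((E^{\vee})^{\oplus m})=c_{p-q}(E^{\vee})^{m}$. Since $c_{p-q}(E^{\vee})=(-1)^{p-q}c_{p-q}(E)$, applying this with $E=\qc_{\bG_{i_p}(\bC^{i_q})}$ yields
$$c_{top}(\mathcal N_{pq}\mid_{F_{pq}})=\big((-1)^{p-q}c_{p-q}(\qc_{\bG_{i_p}(\bC^{i_q})})\big)^{c+1-q}=\pm c_{pq}^{c+1-q},$$
as claimed.

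There is no real obstacle here; the only point requiring a little care is the bookkeeping of ranks and of the identification $(S_q/S_p)\mid_{F_{pq}}\cong\qc^{\vee}_{\bG_{i_p}(\bC^{i_q})}$, which I would justify by noting that on $\bG_{i_p}(\bC^{i_q})$ the quotient of the trivial bundle $\bC^{i_q}$ by the tautological subbundle $S_p$ is, by definition, the universal quotient bundle, so dualizing gives $(S_q/S_p)^{\vee}\mid_{F_{pq}}\cong\qc_{\bG_{i_p}(\bC^{i_q})}$ and hence $(S_q/S_p)\mid_{F_{pq}}\cong\qc^{\vee}_{\bG_{i_p}(\bC^{i_q})}$. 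The sign $\pm$ in (2) is left ambiguous on purpose since only the fact that $c_{top}(\mathcal N_{pq}\mid_{F_{pq}})$ is a nonzero multiple of a power of $c_{pq}$ will matter in the sequel (cf. Remark \ref{divisible}), so no effort is needed to pin it down precisely.
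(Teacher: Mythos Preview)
Your approach is exactly the paper's: restrict Proposition \ref{normalbundle} to a fiber, observe that $S_q$ and $\jc_{pq}$ become trivial of ranks $i_q$ and $c+1-q$, and read off the result. The Chern class computation in (2) is also correct.

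There is, however, a slip in your identification of $S_q/S_p\mid_{F_{pq}}$. On $F_{pq}=\bG_{i_p}(\bC^{i_q})$ the bundle $S_p$ is the tautological subbundle and $S_q$ is the trivial bundle $\bC^{i_q}$, so $S_q/S_p\mid_{F_{pq}}=\bC^{i_q}/S_p$ is \emph{by definition} the universal quotient bundle $\qc_{\bG_{i_p}(\bC^{i_q})}$, not its dual. Your displayed chain of isomorphisms
\[
\mathcal N_{pq}\mid_{F_{pq}}\cong (S_q/S_p\mid_{F_{pq}})^{\vee}\otimes\bC^{c+1-q}\cong \qc^{\vee}\otimes\bC^{c+1-q}
\]
is correct precisely because $(S_q/S_p)\mid_{F_{pq}}\cong\qc$, but your verbal justification (``identification of $(S_q/S_p)\mid_{F_{pq}}$ with the dual of the universal quotient bundle'') and the final paragraph contradict this and, if taken literally, would yield $\qc\otimes\bC^{c+1-q}$ instead. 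Simply replace ``dual of the universal quotient bundle'' by ``universal quotient bundle'' throughout, and drop the spurious dualization step in the last paragraph; then everything is consistent and matches the paper.
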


\medskip

\section{Gysin morphisms}

Our main aim in this section is to provide an explicit decomposition  (\ref{decomposition}).  
Consider the commutative diagram (\ref{D2}) and set $\jmath_{pq} :=\tilde\iota_{pq}\circ \tilde\pi_{pq}$.
Since $\tDpq $  is smooth,   the map $\jmath_{pq}$   
induces   a Gysin morphism 
$R{\jmath_{pq}}{_*}\mathbb Q_{\tDpq}\to \mathbb Q_{\tD_p}[2d_{pq}]$. We are going to prove that 
it is possible to describe explicitly the decomposition (\ref{decomposition}) by means of the Gysin morphism above and 
the Leray-Hirsch Theorem. Such a description is NOT canonical as it depends on the choice 
of a {\it cohomology extension} (cf. Remark \ref{cohomologyextension}). Further, we will describe in Remark \ref{differentcohext} the effect that  a change of the cohomology extension has on the splitting.
The main point of the proof consists in showing that the Gysin morphism provides an  isomorphism

\begin{equation}\label{canonical}
\sideset{_{}^{p}}{_{}^{i}}{\mathop \hc}(R\,\pi_{p *}\mathbb Q_{\tD_{p}}[m_{p}])_{\Delta_q}
\cong D_{pq}^{\delta_{pq}+i}\otimes_{\mathbb Q}
R\iota_{pq}IC_{\Delta_q}^{\bullet}
\end{equation}
which turn out to be \textit{canonical, namely independent of the cohomology extension} (cf. Theorem \ref{isoDE}).
We also show in passing that the geometrical meaning of the isomorphism (\ref{canonical}) is that the perverse cohomology sheaves
are generated (via the Gysin morphism) by
 the cohomology classes of the fibre  that are ``divisible'' by the top Chern class of the normal bundle  described in \S 4.
 (cf. Remark \ref{commento}).

\bigskip

Let us come back to the fibre square commutative diagram  of \S 3.
Since $\tD $  is smooth,  the map $\jmath$ also  
induces  a Gysin morphism 
$R{\jmath{_*}}\mathbb Q_{\tD}\to \mathbb Q_{X}[2d]$, $d:= dimX -dim\tD$ (cf. \cite[p. 83]{FultonCF}).
By the self-intersection formula \cite[p. 92]{FultonCF}, \cite{Jou}, \cite{Yama}, 
 the composite of such a Gysin morphism with the pull-back morphism $\mathbb Q_{X}\to R{\jmath{_*}}\mathbb Q_{\tD}$
coincides with the  cup product with the top Chern class  $\kappa$ of the normal bundle of $\tD \subset X$:

\begin{equation}
\label{cupwith}
 \cup \kappa : R{\jmath{_*}}\mathbb Q_{\tD}\to R{\jmath{_*}}\mathbb Q_{\tD}[2d].
\end{equation}
By composition with $\rho$, the  morphism above
induces 
$
\gamma: R{\rho}{_*}\mathbb  Q_{\tD}\to  R{\rho}{_*}\mathbb Q_{\tD}[2d].
$

\begin{remark}\label{cohomologyextension} Similarly as in Remark \ref{NewRemark} (2),
assume  that $\rho$ is a locally trivial fibration with fibre $F$ and that
 the restriction map $H^{\alpha}(\tD)\to
H^{\alpha}(F)$ is onto for all $\alpha\in\mathbb Z$.
In this case, the trivialization (\ref{senzaLH}) is provided by any  {\it cohomology extension
 of the
fibre} $H^{*}(F)\stackrel{\theta }{\to} H^{*}(\tD)$ \cite[p. 256-258]{Spanier}.
Futhermore, the splitting (\ref{fibration}) can be viewed as a consequence of the Leray-Hirsch Theorem
(\cite[p. 182 and p. 195]{Voisin}, \cite[Lemma 2.5 and
proof]{DGF2}):
\begin{equation}\label{LHfirst}
 R\rho{_*}\bQ _{\widetilde{\Delta}}\stackrel{\theta }{\cong}\oplus_{\alpha=0}^{2\dim F} H^{\alpha}(F)\otimes \mathbb Q_{\Delta}[-\alpha],
\, \, \,\, \sideset{_{}^{p}}{_{}^{m}}{\mathop \hc}({\imath}{^*}R\pi{_*}\bQ _{\widetilde X})\cong H^{m-\dim \Delta}(F)\otimes \bQ _{ \Delta}[\dim \Delta].
\end{equation}
\noindent
The meaning of the isomorphism (\ref{LHfirst}) is that to each cohomology class $a \in H^{\alpha}(F)$ corresponds the morphism 
$\theta (a)\in H^{\alpha}(\tD)\cong Hom _{D^b_c(\Delta)}(\bQ _{\Delta}[-\alpha], R\rho{_*}\bQ _{\widetilde \Delta})$
so that 
$$ \theta \in Hom _{D^b_c(\Delta)}(\oplus_{\alpha=0}^{2\dim F} H^{\alpha}(F)\otimes \mathbb Q_{\Delta}[-\alpha], R\rho{_*}\bQ _{\widetilde \Delta}).$$
The splitting (\ref{LHfirst}) is NOT uniquely determined, rather it depends on the choice of a cohomology extension $H^{*}(F)\stackrel{\theta }{\to} H^{*}(\tD)$.
Specifically, any other choice 
$$\theta '= \theta + \omega \,\,\,\,\, \text{with}\,\,\,\,\,
Im(\omega )\subset Ker(H^{*}(\tD)\to H^{*}(F)) $$
 would give a different splitting in (\ref{LHfirst}). 
\end{remark}

Fix now two cohomology extensions
 of the
fibre $\theta, \, \overline{\theta}: H^{*}(F)\to H^{*}(\tD)$. Then, the morphism $\gamma$ above becomes
\begin{equation}
\label{2cohext}
\oplus_{\alpha=0}^{2\dim F} H^{\alpha}(F)\otimes \mathbb Q_{\Delta}[-\alpha]
\stackrel{\theta }{\cong} R{\rho}{_*}\mathbb  Q_{\tD}\to  R{\rho}{_*}\mathbb Q_{\tD}[2d]\stackrel{\overline{\theta} }{\cong}
\left( \oplus_{\alpha=0}^{2\dim F} H^{\alpha}(F)\otimes \mathbb Q_{\Delta}[-\alpha]\right)[2d].
\end{equation}
If we fix a basis $B$ of $H^{\bullet}(F)$ and a cohomology class $a\in B$, then the Leray-Hirsch Theorem implies that
\begin{equation}\label{tensorwithkappa}
\kappa \cup \theta(a)= \sum_{b\in B} \rho^*(\omega_{a \, b})\cup \overline{\theta }(b)\in H^{\deg a +2d}(\tD)\stackrel{\overline{\theta}}{\cong} \oplus _{i=0}^{2\dim F} (H^{\deg a+2d -i}(\Delta)\otimes H^{i}(F)),
\end{equation}
 for suitable and uniquely determined cohomology classes 
$\omega_{a \, b}\in H^{\deg a +2d - \deg b}(\Delta)\cong$ \par\noindent
$\cong Hom_{D_c^b(\Delta)}(\bQ_{\Delta}[-\deg a], \bQ_{\Delta}[ 2d - \deg b])$.
Then the cohomology class $\omega_{a \, b}$ represents the ``component''  of the morphism (\ref{2cohext}) with respect to the basis elements $a$, $b$ :
\begin{equation}
\label{comp2cohext}
\omega_{a\, b}: a\otimes \mathbb Q_{\Delta}[-\deg a] \to 
b \otimes \mathbb Q_{\Delta}[-\deg b + 2d].
\end{equation}
\par\noindent

\medskip

\begin{remark}
\label{differentcohext} Similarly as above, we have
\begin{equation}\label{differenttheta}
 \overline{\theta }(a)= \sum_{b\in B} \rho^*(\epsilon_{a \, b})\cup \theta(b)\in H^{\deg a }(\tD)\stackrel{\theta}{\cong} \oplus _{i} (H^{\deg a -i}(\Delta)\otimes H^{i}(F)),
\end{equation}
for  uniquely determined cohomology classes 
$$\epsilon_{a \, b}\in H^{\deg a  - \deg b}(\Delta)\cong 
 Hom_{D_c^b(\Delta)}(\bQ_{\Delta}[-\deg a], \bQ_{\Delta}[  - \deg b]).$$
The meaning of  (\ref{differenttheta}) is that to each cohomology class $\rho^*(\epsilon_{a \, b})\cup \theta(b)$ corresponds the compositum 
$$a\otimes \bQ_{\Delta}[- \deg a]\stackrel{\epsilon_{a \, b}}{\longrightarrow} b\otimes \bQ_{\Delta}[- \deg b]\stackrel{\theta(b)}{\longrightarrow}
 R\rho{_*}\bQ _{\widetilde \Delta}.$$
In other words, we have $\overline{\theta}=\theta \circ (\epsilon_{a \, b})$, with the matrix $(\epsilon_{a \, b})$ lying in
\par\noindent
$$Hom _{D^b_c(\Delta)}(\oplus_{\alpha=0}^{2\dim F} H^{\alpha}(F)\otimes \mathbb Q_{\Delta}[-\alpha], \oplus_{\alpha=0}^{2\dim F} H^{\alpha}(F)\otimes \mathbb Q_{\Delta}[-\alpha]).$$

Obviously we have $\epsilon_{a \, b}=0$, if $\deg b> \deg a$. Furthermore, by the very definition of cohomology
extension, the compositum of any   $H^{*}(F)\stackrel{\theta }{\to} H^{*}(\tD)$ with the projection on  $ H^{\deg a}(F)$ is the identity map.
Thus we have:
\begin{equation}
\label{belong}
\theta(a)- \overline{\theta}(a) \in \oplus _{i=0}^{\deg a -1} H^{i}(F)\otimes Hom_{D_c^b(\Delta)}(\bQ_{\Delta}[-\deg a], \bQ_{\Delta}[-i])
\cong Hom_{D_c^b(\Delta)}(\bQ_{\Delta}[-\deg a], (\sideset{_{}^{p}}{_{\leq k}^{}}{\mathop \tau} R\rho{_*}\bQ _{\widetilde{\Delta}})),
\end{equation}
\end{remark}
\noindent
where $k:= \deg a + \dim \Delta -1$. In particular,  the matrix $(\epsilon_{a \, b})$ is \textit{upper triangular}.
\medskip

\begin{remark}
\label{PieriI}
\begin{enumerate}
\item By Pieri's formula \cite[Formula 3.2.8]{Manivel},
the map 
$$D^{2\mid \lambda \mid}_{pq}\stackrel {\cup\,
c_{pq}^{c+1-q}}{\longrightarrow} A^{2(\mid \lambda \mid+ d_{pq})}_{pq},$$ 
determined by
cup-product with $c_{pq}^{c+1-q}$ (compare with Corollary \ref{cupc}), is injective  for all  $\mid  \lambda \mid \leq (p-q) \times(k-c+q-p)$.
In what follows, we denote by $E^{2(\mid \lambda \mid+ d_{pq})}_{pq}\subset A^{2(\mid \lambda \mid+ d_{pq})}_{pq}$ the image of the previous cup-product. Of course we have
\begin{equation}\label{Pieridim}
\cup\,c_{pq}^{c+1-q}: D^{2\mid \lambda \mid}_{pq} \leftrightarrow E^{2(\mid \lambda \mid+ d_{pq})}_{pq} \quad {\text  and } \quad   
dimD^{2\mid \lambda \mid}_{pq} = dim E^{2(\mid \lambda \mid+ d_{pq})}_{pq}
\end{equation}
Again by Pieri's formula, $E^{2(\mid \lambda \mid+ d_{pq})}_{pq}\subset A^{2(\mid \lambda \mid+ d_{pq})}_{pq}$
has a canonical basis which is in one-to-one correspondence with all partitions
$\lambda =(\lambda_1 , \dots ,\lambda _{p-q} )$ such that
$\lambda _{i}\geq (c+1-q)$, $1\leq i \leq p-q$.

\item Observe that partitions that are NOT contained in a 
$(p-q)\times (k-c+q-p)$ rectangle give a basis for the kernel of  the cup product 
$c_{pq}^{c+1-q}: H^{\bullet}(\bG_{p-q}(\bC ^{i_q}))\to E^{\bullet + d_{pq}}_{pq}$, and that 
{\it the composite } 
$$(c_{pq}^{c+1-q}){^{-1 }}\circ c_{pq}^{c+1-q}: H^{\bullet}(\bG_{p-q}(\bC ^{i_q}))\to E^{\bullet + d_{pq}}_{pq}
\to D^{\bullet}_{pq}$$ {\it coincides with the pull-back map}
$ H^{\bullet}(\bG_{p-q}(\bC ^{i_q}))\to H^{\bullet}(\bG_{p-q}(\bC ^{k-c}))\cong D^{\bullet}_{pq}$ 
(compare with Notations \ref{Dpq}).
\end{enumerate}
\end{remark}

Consider again the commutative diagram (\ref{D2}) and set $\jmath_{pq} :=\tilde\iota_{pq}\circ \tilde\pi_{pq}$.
As we said,   the map $\jmath_{pq}$   
induces   a Gysin morphism 
$R{\jmath_{pq}}{_*}\mathbb Q_{\tDpq}\to \mathbb Q_{\tD_p}[2d_{pq}]$ and, via composition with $\pi_p$, an arrow

\begin{equation}\label{trueGysin}
R{(\pi_p \circ \jmath_{pq})}{_*}\mathbb  Q_{\tDpq}=R{(\iota_{pq}\circ\pi_q \circ \rho_{pq})}{_*}\mathbb  Q_{\tDpq}\to R{\pi_p }{_*}\mathbb Q_{\tD_p}[2d_{pq}], \quad \text{in} \quad D_c^b(\Delta_p).
\end{equation}

By the Leray-Hirsch Theorem, we have
$
\bigoplus_{\alpha=0}^{2k_{pq}} A^{\alpha}_{pq}\otimes \mathbb Q_{\widetilde{\Delta}_q}[-\alpha]
\stackrel{\theta _{pq}}{\cong} R\rho_{pq*}\mathbb Q_{\widetilde\Delta_{pq}},$
for any cohomology extension $\theta _{pq}$,
and the above morphism become 

\begin{equation}\label{spqLH}
s_{pq}: R\iota_{pq}{_*}\left(\bigoplus_{\alpha=0}^{2k_{pq}} A^{\alpha}_{pq}\otimes  R{\pi}_{q \, *}\mathbb Q_{\widetilde{\Delta}_q}[-\alpha] \right)
 \stackrel{\theta _{pq}}{\cong} R(\iota_{pq}\circ \pi_q \circ \rho_{pq}){_*}\mathbb Q_{\tDpq} 
\to R{\pi_p}_{*}\mathbb Q_{\tD_p}[2d_{pq}].
\end{equation}

\noindent
Similarly,  applying   the Leray-Hirsch Theorem to the compositum of $\pi_p$ with the pull-back morphism we get

$$
\iota_{pq}{^*} R{\pi_p}_{*}\mathbb Q_{\tD_p}[2d_{pq}]\to
R( \pi_q \circ \rho_{pq})_*\mathbb Q_{\widetilde\Delta_{pq}}[2d_{pq}]\stackrel{\overline{\theta} _{pq}}{\cong}
\bigoplus_{\alpha=0}^{2k_{pq}} A^{\alpha+2d_{pq}}_{pq}\otimes 
R{\pi}_{q \, *} \mathbb Q_{\widetilde{\Delta}_q}[-\alpha]
,\,\, \text{in} \,\, D_c^b(\Delta_q),
$$
and
\begin{equation}
\label{full}
\bigoplus_{\alpha=0}^{2k_{pq}} A^{\alpha}_{pq}\otimes  R{\pi}_{q \, *}\mathbb Q_{\widetilde{\Delta}_q}[-\alpha] 
 \stackrel{\theta _{pq}}{\cong} R( \pi_q \circ \rho_{pq}){_*}\mathbb Q_{\tDpq} 
\to \iota_{pq}{^*}R{\pi_p}_{*}\mathbb Q_{\tD_p}[2d_{pq}]
\stackrel{\overline{\theta} _{pq}}{\cong}
\bigoplus_{\alpha=0}^{2k_{pq}} A^{\alpha+2d_{pq}}_{pq}\otimes 
R{\pi}_{q \, *} \mathbb Q_{\widetilde{\Delta}_q}[-\alpha]
\end{equation}
in $ D_c^b(\Delta_q)$ (compare with (\ref{2cohext})).
\medskip
\noindent
\medskip
\noindent On the other hand, the Decomposition Theorem tells us that $IC_{\Delta_q}^{\bullet}[-m_q]$ is  split embedded in 
$R{\pi}_{q \, *}\mathbb
Q_{\widetilde{\Delta}_q}$, thus we may
define
\begin{equation}\label{sigmapqLH}
\sigma_{pq}: R\iota_{pq}{_*}\left(\bigoplus_{\alpha=0}^{2k_{pq}} A^{\alpha}_{pq}\otimes  IC_{\Delta_q}^{\bullet}[-m_q-\alpha] \right) 
\to R{\pi_p}_{*}\mathbb Q_{\tD_p}[2d_{pq}],\,\, \text{in} \,\, D_c^b(\Delta_p),
\end{equation}

\begin{equation}
\label{taupqLH}
\tau_{pq}: \iota_{pq}{^*} R{\pi_p}_{*}\mathbb Q_{\tD_p}[2d_{pq}]\to
\bigoplus_{\alpha=0}^{2k_{pq}} A^{\alpha+2d_{pq}}_{pq}\otimes 
 IC_{\Delta_q}^{\bullet}[-m_q-\alpha]
,\,\, \text{in} \,\, D_c^b(\Delta_q),
\end{equation}

\begin{equation}
\label{gammapqLH}
\gamma_{pq}:=\tau_{pq}\circ \iota_{pq}{^*}(\sigma _{pq}): \bigoplus_{\alpha=0}^{2k_{pq}} A^{\alpha}_{pq}\otimes 
IC_{\Delta_q}^{\bullet}[-m_q-\alpha]\rightarrow
\bigoplus_{\alpha=0}^{2k_{pq}} A^{\alpha+2d_{pq}}_{pq}\otimes 
IC_{\Delta_q}^{\bullet}[-m_q-\alpha],\,\, \text{in} \,\, D_c^b(\Delta_q).
\end{equation}

\medskip

In the following Theorem recall that $\delta_{pq}=m_p-m_q -2d_{pq}$ (cf. \S 2.6).
\begin{theorem}
\label{isoDE} For every cohomology extension $\theta_{pq}$ and for every $i$
the compositum 
$$
 D_{pq}^{\delta_{pq}+i}\otimes_{\mathbb Q}
R\iota_{pq}{_*}IC_{\Delta_q}^{\bullet}[-i]
\stackrel{\sigma_{pq}}{\rightarrow}
R{\pi_p}_{*}\mathbb Q_{\tD_p}[m_p]
\rightarrow
\sideset{_{}^{p}}{_{}^{i}}{\mathop \hc}(R\,\pi_{p *}\mathbb Q_{\tD_{p}}[m_{p}])_{\Delta_q}[-i]
$$
is an isomorphism. Furthermore, the isomorphism above is independent of the cohomology extension.
\end{theorem}

\begin{proof}
We start the proof with the following
\vskip2mm
\noindent
\textit{Claim: for every}  $\theta_{pq}$ \textit{there exists} $\overline{\theta}_{pq}$ \textit{such that the map} $\gamma_{pq}$ \textit{induces an isomorphism}
$$
\gamma^0: D_{pq}^{\alpha}\otimes_{\mathbb Q}
\bQ _{\Delta_q^0}[-\alpha]\longleftrightarrow
E_{pq}^{\alpha+2d_{pq}}\otimes_{\mathbb Q}
\bQ _{\Delta_q^0}[-\alpha], \quad \forall \, 0\leq \alpha \leq \delta_{pq},$$
\textit{where} $\gamma^0:= \gamma_{pq}\mid_{\Delta^0_q}$ \textit{denotes the restriction to the smooth locus}.
\vskip2mm

Proceeding as above  (cf. (\ref{cupwith})  and (\ref{comp2cohext})), we fix a basis $B$ of $D_{pq}^{\bullet}$ and choose $a\in B$. The  morphism induced by $\gamma^0$
on the summand
 $a\otimes \mathbb Q_{\Delta_q^0}$ is  the cup product
with $\kappa \cup\theta_{pq}(a) $, where $\kappa$ is the top Chern class  of the normal bundle of $\Delta_{pq}^0\subset \tD_p$ (cf. (\ref{cupwith})). By Corollary \ref{cupc}, the restriction of $\kappa$ to the fiber $F_{pq}$ 
of the fibration $\Delta_{pq}^0\to \Delta_q^0$ is $c_{pq}^{c+1-q}$. Furthermore,
by Remark \ref{PieriI}, the cup product with $c:=c_{pq}^{c+1-q}$ provides an isomorphism
$$
\cup\,c: D^{\alpha}_{pq} \leftrightarrow E^{\alpha+ d_{pq}}_{pq}. 
$$
We thus can define $\overline{\theta}_{pq}$ in such a way that
\begin{equation}\label{defthetabar}
\overline{\theta}_{pq}:E^{\bullet}_{pq}\to H^{\bullet}(\Delta_{pq}^0),  \quad \overline{\theta}_{pq}(a\cup c):=\kappa \cup\theta_{pq}(a), \,\, \forall \, a\in B.
\end{equation}
With notations as in  (\ref{comp2cohext}), the component $\gamma_{a , \, a\cup c }^0: a\otimes \mathbb Q_{\Delta_q^0}\to a\cup c \otimes \mathbb Q_{\Delta_q^0}$ is an isomorphism and any other component vanishes.
In other words, 
if we look at the morphism $\oplus_{\alpha=0}^{2\delta_{pq}}D_{pq}^{\alpha}\otimes_{\mathbb Q}
\mathbb Q_{\Delta_q^0}[-\alpha]\to 
\oplus_{\alpha=0}^{2\delta_{pq}}E_{pq}^{\alpha+2d_{pq}}\otimes_{\mathbb Q}
\mathbb Q_{\Delta_q^0}[-\alpha]$ 
in matrix notations with respect to the bases $B$ and $c\cup B$, then it is represented by the identity matrix. The asserted Claim follows at once. 

The intersection cohomology complex $IC_{\Delta _q}^{\bullet} $ is a simple object in the abelian category Perv($\Delta _q$) \cite[Theorem 5.2.12]{Dimca2}, so the Claim just proved implies 
that the compositum
\begin{equation}\label{gammalim}
\gamma_{pq}: D_{pq}^{\alpha}\otimes_{\mathbb Q}
IC_{\Delta_q}^{\bullet}
\stackrel{ \sigma _{pq}}{\rightarrow }\iota_{pq}{^*} R{\pi_p}_{*}\mathbb Q_{\tD_p}[m_q+ \alpha +2d_{pq}]
\stackrel{ \tau_{pq}}{\rightarrow}
A^{\alpha+2d_{pq}}_{pq}\otimes 
IC_{\Delta_q}^{\bullet}
\end{equation}
is a monomorphism in the abelian category Perv($\Delta_q$). In view of Theorem \ref{PerverseCohomology} and taking into account the Decomposition Theorem,
in order to conclude it suffice
to observe that  (\ref{gammalim}) factors as
$$D_{pq}^{\delta_{pq}+i}\otimes_{\mathbb Q}
IC_{\Delta_q}^{\bullet}
\rightarrow \iota_{pq}{^*} R{\pi_p}_{*}\mathbb Q_{\tD_p}[m_p + i]
\rightarrow
\sideset{_{}^{p}}{_{}^{i}}{\mathop \hc}(R\,\pi_{p *}\mathbb Q_{\tD_{p}}[m_{p}])_{\Delta_q}
\rightarrow
A^{\delta_{pq}+i+2d_{pq}}_{pq}\otimes 
IC_{\Delta_q}^{\bullet},$$ where we set $\alpha = i + \delta_{pq}$.

As for the asserted independence of the extension class, we recall that $\sigma_{pq}$ is deduced from $s_{pq}$ from the fact that  $IC_{\Delta_q}^{\bullet}[-m_q]$ is  split embedded in 
$R{\pi}_{q \, *}\mathbb
Q_{\widetilde{\Delta}_q}$ (cf. (\ref{spqLH}) and (\ref{sigmapqLH})). Hence, it is enough to prove that the compositum

$$ D_{pq}^{\delta_{pq}+i}\otimes  R({\iota_{pq} \circ \pi_q})_{ *}\mathbb Q_{\widetilde{\Delta}_q}[m_q-i] 
 \stackrel{\sigma _{pq}}{\to} 
 R{\pi_p}_{*}\mathbb Q_{\tD_p}[m_p]
 \to \sideset{_{}^{p}}{_{}^{i}}{\mathop \hc}(R\,\pi_{p *}\mathbb Q_{\tD_{p}}[m_{p}])_{\Delta_q}[-i]
$$
 does not depend on the extension class. This is a consequence of Remark \ref{differentcohext} because the difference between
 two cohomology extensions factor as
 $$
 D_{pq}^{\delta_{pq}+i}\otimes  R({\iota_{pq} \circ \pi_q})_{ *}\mathbb Q_{\widetilde{\Delta}_q}[m_q-i] 
 \to
 \sideset{_{}^{p}}{_{\leq i-1}^{}}{\mathop \tau} R{\pi_p}_{*}\mathbb Q_{\tD_p}[m_p]
 \to 
 \sideset{_{}^{p}}{_{}^{i}}{\mathop \hc}(R\,\pi_{p *}\mathbb Q_{\tD_{p}}[m_{p}])_{\Delta_q}[-i]
 $$
 and 
 $$Hom_{D^b_c(\Delta _p)}(\sideset{_{}^{p}}{_{\leq i-1}^{}}{\mathop \tau} R{\pi_p}_{*}\mathbb Q_{\tD_p}[m_p],\sideset{_{}^{p}}{_{}^{i}}{\mathop \hc}(R\,\pi_{p *}\mathbb Q_{\tD_{p}}[m_{p}])_{\Delta_q}[-i])  =0,$$
 since $\sideset{_{}^{p}}{_{\leq i-1}^{}}{\mathop \tau} R{\pi_p}_{*}\mathbb Q_{\tD_p}[m_p] \in D_{\leq i-1}(\Delta _p)$ and
 $\sideset{_{}^{p}}{_{}^{i}}{\mathop \hc}(R\,\pi_{p *}\mathbb Q_{\tD_{p}}[m_{p}])_{\Delta_q}[-i]) \in D_{i}(\Delta _p)$.
\end{proof}

\medskip

\begin{theorem}\label{Gysinsplitting} For every choice of  cohomology extensions $\theta_{pq}$, $q<p$, we have:
\begin{enumerate}
\item 
the compositum 
$$
 \oplus_{q<p}D_{pq}^{\delta_{pq}+i}\otimes_{\mathbb Q}
R\iota_{pq}{_*}IC_{\Delta_q}^{\bullet}[-i]
\stackrel{\oplus \sigma_{pq}}{\longrightarrow}
R{\pi_p}_{*}\mathbb Q_{\tD_p}[m_p]
\rightarrow
\sideset{_{}^{p}}{_{}^{i}}{\mathop \hc}(R\,\pi_{p *}\mathbb Q_{\tD_{p}}[m_{p}])[-i]
$$
is a canonical isomorphism, $\forall \,i>0$;
\item the morphism $\sigma_{pq}$ gives a split embedding of the complex 
$\sideset{_{}^{p}}{_{}^{}}{\mathop \hc}(R\,\pi_{p *}\mathbb Q_{\tD_{p}}[m_{p}])_{\Delta_q}$
\par\noindent
$ 
\cong\bigoplus_{i=-\delta_{pq}}^{\delta_{pq}} D_{pq}^{\delta_{pq}+i}\otimes_{\mathbb Q}  R\iota_{pq}{_*}IC_{\Delta_q}^{\bullet}[-i] $ into the complex
 $ R{\pi_p}_{*}\mathbb Q_{\tD_p}[m_p]$;
\item the morphism $\oplus_{q<p}\sigma_{pq}$ gives a split embedding of the complex
$$\bigoplus_{q<p}\left(\sideset{_{}^{p}}{_{}^{}}{\mathop \hc}(R\,\pi_{p *}\mathbb Q_{\tD_{p}}[m_{p}])_{\Delta_q}\right)\cong
\bigoplus_{q<p}\left(\bigoplus_{i=-\delta_{pq}}^{i=\delta_{pq}}D_{pq}^{\delta_{pq}+i}\otimes_{\mathbb Q}R\iota_{pq}{_*}IC_{\Delta_q}^{\bullet}[-i]\right)$$
into the complex
 $ R{\pi_p}_{*}\mathbb Q_{\tD_p}[m_p].$
\end{enumerate}
Thus, a (non-canonical) splitting of $R{\pi_p}_{*}\mathbb Q_{\tD_p}[m_p]$ into simple objects is provided by the Gysin morphisms $\sigma_{pq}$, depending 
on the choice of the cohomology extensions $\theta_{pq}$, $\forall q<p$.
\end{theorem}
\begin{proof}
(1) The statement follows directly from  Theorem \ref{isoDE}, as each intersection cohomology complex $IC_{\Delta_q}^{\bullet}$ has neither sub-objects nor quotients 
supported on smaller strata \cite[\S 2.7]{DeCMReview}.

(2) It suffices to argue by induction, by applying Theorem \ref{isoDE} to the following commutative diagram with row split-exact sequences
$$
\begin{array}{ccccc}
 \tau_{\leq k-1}(\sideset{_{}^{p}}{_{}^{}}{\mathop \hc}(R\,\pi_{p *}\mathbb Q_{\tD_{p}}[m_{p}])_{\Delta_q})
 & \to  & \tau_{\leq k}(\sideset{_{}^{p}}{_{}^{}}{\mathop \hc}(R\,\pi_{p *}\mathbb Q_{\tD_{p}}[m_{p}])_{\Delta_q}) & \to 
 & \sideset{_{}^{p}}{_{}^{k}}{\mathop \hc}(R\,\pi_{p *}\mathbb Q_{\tD_{p}}[m_{p}])_{\Delta_q})[-k]  \\
  \updownarrow & & \uparrow  &  & \updownarrow \\
  \oplus_{i\leq k-1} D_{pq}^{\delta_{pq}+i}\otimes_{\mathbb Q}  R\iota_{pq}{_*}IC_{\Delta_q}^{\bullet}[-i] & \to 
 & \oplus_{i\leq k} D_{pq}^{\delta_{pq}+i}\otimes_{\mathbb Q} R\iota_{pq}{_*}IC_{\Delta_q}^{\bullet}[-i] & \to  & 
 D_{pq}^{\delta_{pq}+k}\otimes_{\mathbb Q}  R\iota_{pq}{_*}IC_{\Delta_q}^{\bullet}[-k].\\
\end{array}$$

(3) Similarly as above, the statement follows by induction by applying (1) to the following commutative diagram 
$$
\begin{array}{cccc}
 \tau_{\leq k-1}(\sideset{_{}^{p}}{_{}^{}}{\mathop \hc}(R\,\pi_{p *}\mathbb Q_{\tD_{p}}[m_{p}]))
 & \to  & \tau_{\leq k}(\sideset{_{}^{p}}{_{}^{}}{\mathop \hc}(R\,\pi_{p *}\mathbb Q_{\tD_{p}}[m_{p}])) & \to 
  \\
  \updownarrow & & \uparrow  &  \\
  \oplus_{q<p, \,i\leq k-1} D_{pq}^{\delta_{pq}+i}\otimes_{\mathbb Q}  R\iota_{pq}{_*}IC_{\Delta_q}^{\bullet}[-i] & \to 
 & \oplus_{q<p, \, i\leq k} D_{pq}^{\delta_{pq}+i}\otimes_{\mathbb Q}  R\iota_{pq}{_*}IC_{\Delta_q}^{\bullet}[-i] & \to  \\
\end{array}$$
$$
\begin{array}{cc}
\to  & \sideset{_{}^{p}}{_{}^{k}}{\mathop \hc}(R\,\pi_{p *}\mathbb Q_{\tD_{p}}[m_{p}]))[-k]  \\
   & \updownarrow \\
\to  & \oplus _{q<p}D_{pq}^{\delta_{pq}+k}\otimes_{\mathbb Q}  R\iota_{pq}{_*}IC_{\Delta_q}^{\bullet}[-k].\\
\end{array}$$
\end{proof}

\begin{remark}
\label{commento}
Observe that the isomorphism
$$
\gamma^0: D_{pq}^{\alpha}\otimes_{\mathbb Q}
\bQ _{\Delta_q^0}[-\alpha]\longleftrightarrow
E_{pq}^{\alpha+2d_{pq}}\otimes_{\mathbb Q}
\bQ _{\Delta_q^0}[-\alpha], \quad \forall \, 0\leq \alpha \leq \delta_{pq}$$
established   in the proof of Theorem \ref{isoDE}, shows that 
in the isomorphism (\ref{canonical})  the perverse cohomology sheaves
are generated via the Gysin morphism by
 the cohomology classes of the fibre  that are ``divisible'' by the top Chern class of the normal bundle  described in \S 4 (compare with (\ref{Pieridim}).
\end{remark}

\medskip

\section{The first Deligne splitting}

Having defined an explicit non-canonical splitting in \S 5, we are now aimed at defining a canonical one. Our main results are
collected in Theorem \ref{Deligne}. We are going to prove that the \textit{primitive perverse cohomology complexes} can be canonically
defined by means of the primitive cohomology of the sub-Grassmannians $T_{pq}$. Furthermore, we prove that the canonical morphism defined by Deligne in \cite[Proposition 2.4]{Deligne}
can be obtained as in \S 5 as soon as the cohomology extensions are defined appropriately.
So, the first Deligne splitting (\cite[2.5.1]{Deligne}) can be obtained by means of the Gysin maps defined in (\ref{sigmapqLH}), up to a suitable choice of the cohomology extensions.

\begin{notations}\label{matrix}
Let us consider again a fibration $\rho: \tD \to \Delta$ with fibre $F$ (cf. \S3 and \S5). Any cohomology class $\omega\in H^l(\tD )$ induces, via cup producy,
a morphim 
\begin{equation}
\label{cupomegaLH}
\mc _{\omega}:
\oplus_{\alpha=0}^{2dimF} H^{\alpha}(F)\otimes \mathbb Q_{\Delta}[-\alpha]
\stackrel{\theta }{\cong} R{\rho}{_*}\mathbb  Q_{\tD}\stackrel{\cup \omega }{\rightarrow}  
R{\rho}{_*}\mathbb Q_{\tD}[l]
\stackrel{\theta }{\cong}
\left( \oplus_{\alpha=0}^{2dimF} H^{\alpha}(F)\otimes \mathbb Q_{\Delta}[-\alpha]\right)[l],
\end{equation}
for any cohomology extension
 of the
fibre $\theta: H^{*}(F)\to H^{*}(\tD)$.
Similarly as above, 
if we fix a basis $B$ of $H^{\bullet}(F)$ and a cohomology class $a\in B$, then the Leray-Hirsch Theorem implies that
\begin{equation}\label{tensorwithomegaLH}
\omega \cup \theta(a)= \sum_{b\in B} \rho^*(\omega_{a \, b})\cup \theta (b)\in H^{\deg a +l}(\tD),
\end{equation}
 for well-determined cohomology classes 
$\omega_{a \, b}$, 
representing 
the entries of (\ref{cupomegaLH}): $\mc_{\omega}=(\omega_{a \, b})$, 
$\omega_{a \, b}\in Hom_{D^b_c(\Delta )}(a\otimes \mathbb Q_{\Delta}[-\deg a],
b \otimes \mathbb Q_{\Delta}[-\deg b + l])$.
\end{notations}

\begin{remark}\label{block}
Obviously,
$\omega_{a \, b}=0$ if $\deg a +l < \deg b $. Furthermore,
by definition of cohomology extension we have
$$
\sum_{b\in B\cap H^{l+ \deg a}} \rho^*(\omega_{a \, b})\cup \theta (b)=\theta(\sum_{b\in B\cap H^{l+ \deg a}} \omega_{a \, b}\, b) =\theta (\omega \mid _F \cup a),
$$
where $\omega \mid_F$ denotes the pull-back of $\omega$ on $F$ (observe that  $\omega_{a \, b} \in \bQ$ in the previous sum). 

In other words, 
 \textit{the  block of the matrix} $\mc _{\omega}$
 \textit{lying in} 
 $Hom_{D^b_c(\Delta )}(H^{\alpha}(F)\otimes \mathbb Q_{\Delta}[- \alpha], H^{\alpha +l}(F) \otimes \mathbb Q_{\Delta}[-\alpha ])$
    \textit{is determined by the extension class and depends only on the pull-back of $\omega$ to} $F$.
\end{remark}

\begin{lemma}
Consider the morphism $s_{pq}$ introduced in (\ref{spqLH}), fix a cohomology class $\omega\in H^{2l}(\tD_p)$ and denote by $\overline{\omega}\in H^{2l}(\tDpq)$ the pull-back of $\omega$ via $\jmath _{pq}$.
Then we have
$$ \pi_{p \, *}(\cup\omega )\circ s_{pq}= s_{pq}\circ ( \iota_{pq} \circ \pi_q )_* ( \mc_{\overline{\omega}}) $$
in
$Hom_{D^b_c(\Delta _p )}\left(R\iota_{pq}{_*}\left(\bigoplus_{\alpha=0}^{2k_{pq}} A^{\alpha}_{pq}\otimes  R{\pi}_{q \, *}\mathbb Q_{\widetilde{\Delta}_q}[-\alpha] \right), R{\pi_p}_{*}\mathbb Q_{\tD_p}[2d_{pq}+2l]\right).
$
\end{lemma}
\begin{proof}
The
 Gysin morphism 
$\Gamma_{pq}: R{\jmath_{pq}}{_*}\mathbb Q_{\tDpq}\to \mathbb Q_{\tD_p}[2d_{pq}]$ that we have considered in the previous section
is an example of \textit{Topological Bivariant class} \cite{FultonCF}, \cite{DeCM}:
$$\Gamma_{pq}\in H^{2d_{pq}}(\tDpq \stackrel{\jmath_{pq}}{\to} \tD_p) \cong Hom_{D^b_c(\tD_p)}^{\bullet}(R{\jmath_{pq}}{_*}\mathbb Q_{\tDpq}, \bQ_{\tD_p}).$$
Since the Topological Bivariant theory is \textit{skew-commutative} \cite[p. 22]{FultonCF},
we have 
$$\cup \omega \circ \Gamma_{pq}=\Gamma_{pq}\circ \cup \overline{\omega} \quad \text{in} \quad Hom_{D^b_c(\tD _p )}(R{\jmath_{pq}}{_*}\mathbb Q_{\tDpq}, \bQ_{\tD_p}[2l+2d_{pq}]),$$
for
any cohomology class $\omega \in H^{2l}(\tD_p)$.  By applying $\pi _p$, we get
$$\pi_{p \, *}(\cup\omega \circ   \Gamma_{pq})=\pi_{p \, *}(\Gamma_{pq}\circ \cup \overline{\omega} )
\quad \text{in} \quad Hom_{D^b_c(\Delta _p )}(R{(\pi_p \circ \jmath_{pq})}{_*}\mathbb Q_{\tDpq}, R{\pi _p}{_*}\bQ_{\tD _p}[2l+2d_{pq}]).$$
In view of  (\ref{spqLH}), $s_{pq}$ coincides with $\pi_{p \, *}(\Gamma_{pq}))$ applied on the complex
$$
R\iota_{pq}{_*}\left(\bigoplus_{\alpha=0}^{2k_{pq}} A^{\alpha}_{pq}\otimes  R{\pi}_{q \, *}\mathbb Q_{\widetilde{\Delta}_q}[-\alpha] \right)
 \stackrel{\theta _{pq}}{\cong} R(\iota_{pq}\circ \pi_q \circ \rho_{pq}){_*}\mathbb Q_{\tDpq} 
\cong R{(\pi_p \circ \jmath_{pq})}{_*}\mathbb Q_{\tDpq}.
$$
We conclude by taking into account the description of the matrix $ \mc_{\overline{\omega}}$ given in Notations \ref{matrix}.
\end{proof}
We recall that the map $\sigma_{pq}$ was defined from $s_{pq}$ via the embedding of $IC_{\Delta_q}^{\bullet}[-m_q]$ as a  split summand of
$R{\pi}_{q \, *}\mathbb
Q_{\widetilde{\Delta}_q}$ (cf. (\ref{spqLH}) and (\ref{sigmapqLH})). Thus we have the following

\begin{corollary}\label{afterlemma}
In
$Hom_{D^b_c(\Delta _p )}\left(R\iota_{pq}{_*}\left(\bigoplus_{\alpha=0}^{2k_{pq}} A^{\alpha}_{pq}\otimes IC^{\bullet}_{\widetilde{\Delta}_q}[-m_q-\alpha] \right), R{\pi_p}_{*}\mathbb Q_{\tD_p}[2d_{pq}+2l]\right)
$
we have
$$ \pi_{p \, *}(\cup\omega )\circ \sigma_{pq}= \sigma_{pq}\circ ( \iota_{pq} \circ \pi_q )_* ( \mc_{\overline{\omega}}). $$
\end{corollary} 

\medskip

Let $h$ be the first Chern class of a $\pi_p$-ample line bundle on $\tD_p$.
In the fundamental paper \cite{Deligne}, Deligne introduced a canonical splitting of $R\,\pi_{p *}\mathbb Q_{\tD_{p}}[m_{p}]$
 by means of the \textit{primitive perverse cohomology complexes}:
\begin{equation}
\label{PrimPerv}
\pc^{-i}(R\,\pi_{p *}\mathbb Q_{\tD_{p}}[m_{p}]):= Ker(\cup h^{i+1}: \sideset{_{}^{p}}{_{}^{-i}}{\mathop \hc}(R\,\pi_{p *}\mathbb Q_{\tD_{p}}[m_{p}])\longrightarrow
\sideset{_{}^{p}}{_{}^{i+2}}{\mathop \hc}(R\,\pi_{p *}\mathbb Q_{\tD_{p}}[m_{p}]).
\end{equation}

The following result shows how one can obtain a simple description of the primitive perverse cohomology complexes
and of Deligne's splitting in terms of the morphisms $\sigma_{pq}$ described in (\ref{sigmapqLH}):

\begin{theorem}
\label{Deligne} Let $h$ be the first Chern class of a $\pi_p$-ample line bundle on $\tD_p$ and let $h\mid_{T_{pq}}$ be the pull-back of $h$ to $T_{pq}$.
\begin{enumerate}
\item We have the following isomorphism
\begin{equation}
\label{primitive}
\pc^{-i}(R\,\pi_{p *}\mathbb Q_{\tD_{p}}[m_{p}])_{\Delta_q}\cong 
P^{\delta_{pq}-i}_{pq}\otimes_{\mathbb Q}R{\iota_{pq}}_* IC_{\Delta_q}^{\bullet} \, 
\lhd \sideset{_{}^{p}}{_{}^{-i}}{\mathop \hc}(R\,\pi_{p *}\mathbb Q_{\tD_{p}}[m_{p}])_{\Delta_q},
\end{equation}
where  $P^{\delta_{pq}-i}_{pq}:=Ker((h\mid_{T_{pq}})^{i+1}:D_{pq}^{\delta_{pq}-i}\to D_{pq}^{\delta_{pq}+i+2})$
denotes the primitive cohomology of the sub-grassmannian $T_{pq}$. Furthermore, the isomorphim (\ref{primitive}) is canonical, i.e. independent of the cohomology extension.
\item There exists a cohomology extension for which the restriction of  (\ref{sigmapqLH})
to the primitive perverse cohomology complexes:
$$\sigma_{pq}: \pc^{-i}(R\,\pi_{p *}\mathbb Q_{\tD_{p}}[m_{p}]) \longrightarrow R\,\pi_{p *}\mathbb Q_{\tD_{p}}[m_{p}]$$
coincides with the canonical morphism defined in \cite[Proposition 2.4]{Deligne}. 
\end{enumerate}
\end{theorem}

\begin{proof}
Point (1) is a direct consequence of Theorem \ref{Gysinsplitting}, Remark
\ref{block} and Corollary \ref{afterlemma}. Indeed, by  Theorem \ref{Gysinsplitting} we have canonical isomorphisms
$$
\sideset{_{}^{p}}{_{}^{-i}}{\mathop \hc}(R\,\pi_{p *}\mathbb Q_{\tD_{p}}[m_{p}])\stackrel{\sigma_{pq}}{\longleftrightarrow}
 D_{pq}^{\delta _{pq}-i}\otimes_{\mathbb Q}IC_{\Delta_q}^{\bullet}  \quad \text{and} \quad 
\sideset{_{}^{p}}{_{}^{i+2}}{\mathop \hc}(R\,\pi_{p *}\mathbb Q_{\tD_{p}}[m_{p}])
\stackrel{\sigma_{pq}}{\longleftrightarrow} D_{pq}^{\delta _{pq}+i+2}\otimes_{\mathbb Q}
IC_{\Delta_q}^{\bullet},
$$
hence the morphism $\cup h^{i+1}: \sideset{_{}^{p}}{_{}^{-i}}{\mathop \hc}(R\,\pi_{p *}\mathbb Q_{\tD_{p}}[m_{p}])\longrightarrow
\sideset{_{}^{p}}{_{}^{i+2}}{\mathop \hc}(R\,\pi_{p *}\mathbb Q_{\tD_{p}}[m_{p}])$ factors through 
$R\sigma_{pq}{_*}(D_{pq}^{\delta _{pq}+i+2}\otimes_{\mathbb Q} IC_{\Delta_q}^{\bullet})$. 
On the other hand, from Corollary \ref{afterlemma} we get
$$Ker(\cup h^{i+1}: \sideset{_{}^{p}}{_{}^{-i}}{\mathop \hc}(R\,\pi_{p *}\mathbb Q_{\tD_{p}}[m_{p}])\longrightarrow
\sideset{_{}^{p}}{_{}^{i+2}}{\mathop \hc}(R\,\pi_{p *}\mathbb Q_{\tD_{p}}[m_{p}])\cong$$
$$\cong Ker (\mc_{\overline{h}^{i+1}}: 
D_{pq}^{\delta _{pq}-i}\otimes_{\mathbb Q}IC_{\Delta_q}^{\bullet}\to
D_{pq}^{\delta _{pq}+i+2}\otimes_{\mathbb Q} IC_{\Delta_q}^{\bullet}).
$$
Finally, in view of Remark
\ref{block} we have 
$$ Ker (\mc_{\overline{h}^{i+1}}: 
D_{pq}^{\delta _{pq}-i}\otimes_{\mathbb Q}IC_{\Delta_q}^{\bullet}\to
D_{pq}^{\delta _{pq}+i+2}\otimes_{\mathbb Q} IC_{\Delta_q}^{\bullet}) \cong Ker((h\mid_{T_{pq}})^{i+1}\otimes_{\mathbb Q} IC_{\Delta_q}^{\bullet})
$$
and we are done. The independence of the cohomology extension follows directly from Theorem \ref{Gysinsplitting}.

As for  (2), by \cite[Proposition 2.4]{Deligne} and \cite[Lemma 2.4.1]{DeC} we need to prove that we are allowed to change  the cohomology extension in such a way that the compositum
\begin{equation}\label{todelete}
\pc^{-i}(R\,\pi_{p *}\mathbb Q_{\tD_{p}}[m_{p}])_{\Delta_q}[i]
\stackrel{h^{s}\circ\sigma_{pq}}{\longrightarrow} 
R\,\pi_{p *}\mathbb Q_{\tD_{p}}[m_{p}+2s]\to
\end{equation}
$$
\to 
\left(\tau _{\geq s}R\,\pi_{p *}\mathbb Q_{\tD_{p}}[m_{p}]_{\Delta_q}\right)[2s]
\cong
\bigoplus_{k\leq s-i}D_{pq}^{\delta_{pq}+i+k}\otimes_{\mathbb Q}
IC_{\Delta_q}^{\bullet}[i+k]
$$
vanishes when $s>i$. 
We argue by induction on $s$, starting from $s=i+1$.
By point (1) just proved it suffices to show that we can modify $\theta_{pq}$ so that
$$ P^{\delta_{pq}-i}_{pq}\otimes_{\mathbb Q}R{\iota_{pq}}_* IC_{\Delta_q}^{\bullet}
\stackrel{h^{i+1}\circ\sigma_{pq}}{\longrightarrow} 
 D_{pq}^{\delta_{pq}+i+1}\otimes_{\mathbb Q} IC_{\Delta_q}^{\bullet}[1]
$$
vanishes. With notations as in (\ref{cupomegaLH}) and Remark \ref{block}, it is defined by matrix elements of the form
$\sum_{b} \rho_{pq}^*(\omega_{a \, b})\cup \theta_{pq} (b)
$, with $b$ varying into $ B\cap H^{\delta_{pq}+i+1}(T_{pq})$.
But  such matrix elements must vanish since $\omega_{a \, b}\in H^1(\tD_q)=0$.

In general, by induction it is enough to show that we can modify $\theta_{pq}$ so that
$$\left(P^{\delta_{pq}-i}_{pq}\otimes_{\mathbb Q}R{\iota_{pq}}_* IC_{\Delta_q}^{\bullet}
\stackrel{h^{i+k}\circ\sigma_{pq}}{\longrightarrow} 
 D_{pq}^{\delta_{pq}+i+k}\otimes_{\mathbb Q} IC_{\Delta_q}^{\bullet}[k]\right)=0.
$$
By Corollay \ref{afterlemma}, the cup product with $h^{i+k}$ is controlled by the matrix $\mc_{\overline {h}^{i+k}}$ defined in   \ref{matrix}:
\begin{equation}\label{hs}
\overline{h}^{i+k} \cup \theta_{pq}(a)= \sum_{b\in B} \rho^*(\omega_{a \, b})\cup \theta_{pq} (b).
\end{equation}
Again, by Remark \ref{block} the portion of the previous sum concerning the block we are interested in is
$$\sum_{b} \rho_{pq}^*(\omega_{a \, b})\cup \theta_{pq} (b), \quad \omega_{a \, b}\in H^k(\tD_q)\subset Hom(IC_{\Delta_q}^{\bullet}, IC_{\Delta_q}^{\bullet}[k]), \quad b\in B\cap H^{\delta_{pq}+i+k}(T_{pq}).$$
By the hard Lefschetz theorem for $T_{pq}$, we have $b=(h\mid_{T_{pq}})^{i+k}\cup b^{-1}$
for uniquely determined cohomology classes $b^{-1}\in H^{\delta_{pq}-i-k}(T_{pq})$. 

Furthermore, by Remark \ref{differentcohext}  
$
\rho_{pq}^*(\omega_{a \, b})\cup\theta ((h\mid_{T_{pq}})^{i+k}\cup b^{-1})$
and $\rho_{pq}^*(\omega_{a \, b})\cup \overline{h}^{i+k}\cup \theta(b^{-1}) 
$ have the same projection in $Hom_{D^b_c(\Delta_q)}(IC_{\Delta_q}^{\bullet}, IC_{\Delta_q}^{\bullet}[k])$.
In other words, the difference 
$$\overline{h}^{i+k} \cup \theta_{pq}(a)- \sum_{b} \rho_{pq}^*(\omega_{a \, b})\cup \overline{h}^{i+k}\cup \theta_{pq}(b^{-1}), \quad  b\in B\cap H^{\delta_{pq}+i+k}(T_{pq})$$
has vanishing matrix elements in $Hom_{D^b_c(\Delta_q)}(IC_{\Delta_q}^{\bullet}, IC_{\Delta_q}^{\bullet}[l])$, whenever $l\leq k$.
So such a difference does not affect (\ref{todelete}). For our purposes it suffices to modify the cohomology extension according to
$$\theta_{pq}(a)':=\theta_{pq}(a)-
\sum_{b} \rho_{pq}^*(\omega_{a \, b})\cup \theta_{pq}(b^{-1}),\quad a\in  P^{\delta_{pq}-i}_{pq},\quad  b\in B\cap H^{\delta_{pq}+i+k}(T_{pq}).
$$
\end{proof}
 
\begin{remark}
\label{Delignesplitting} Theorem \ref{Deligne} implies that the first Deligne splitting (\cite[2.5.1]{Deligne}) can be obtained by means of the Gysin maps defined in (\ref{sigmapqLH}), for a sutable choice of the cohomology extensions.
\end{remark}

\medskip

\section{An $h$-good splitting.}

\begin{notations}\label{h}
Let us denote by $\oc_{\bG_{i_r}(F)}(1)$ the hyperplane divisor of the Pl\"ucker embedding of $\bG_{i_r}(F)$. Since any fiber of $\pi_r$ is a subgrassmannian of $\bG_{i_r}(F)$, the 
pull-back  
$f_r^*(\oc_{\bG_{i_r}(F)} (1))$   is $\pi_r$-ample on $\tD_r$. In what follows, we denote by $h$ the first Chern class of the pull-back  $f_r^*(\oc_{\bG_{i_r}(F)} (1))$.
\end{notations}

Let $H:= H^{\bullet}(\tD_r)$ be the total cohomology vector space of $\tD_r$. The projection $\pi_r$ endows $H$ with the \textit{perverse Leray  filtration} 
$$ H_p :=P_p H \slash P_{p-1} H\cong \bH^{\bullet}(\sideset{_{}^{p}}{_{}^{p}}{\mathop \hc}(R\,\pi_{r *}\mathbb Q_{\tD_{r}}[m_{r}])). 
$$
In the paper \cite{DeC}, de Cataldo considered  two objects $\bH:= (H, P)$ and $\bH_*:= \oplus_p (H_p, T[-p])$ ($T[-p]$ denotes the trivial filtration shifted to position $p$), in the filtered category of rational vector spaces
and 
constructed five distingushed mixed-Hodge theoretic good splittings $\bH \cong \bH_*$ \cite[Theorem 1.1.1]{DeC}.

In general, the five good splittings turn out to be pairwise distinct but there is a natural condition, the existence of an $e$-good splitting
\cite[Definition 2.4.5]{DeC}, under which they in fact coincide \cite[Theorem 2.6.3]{DeC}. Such a condition says that the cup product with $e$, the 
first Chern class of a relatively ample line bundle, is \textit{homogeneous of degree two} in $\bH_*$.

Theorem  \ref{hgood} proves that this is indeed the case for the cup product with the first Chern class of the pull-back  $f_r^*(\oc_{\bG_{i_r}(F)} (1))$.

\begin{notations}\label{Apq}
Recall from Remark \ref{NewRemark} (3) that,   for all $\beta\in\mathbb Z$,  we set
$A^{\beta}_{pq}:=H^{\beta}(F_{pq})=H^{\beta}(\bG_{i_p}(\bC ^{i_q}))$. 
By \cite[Corollary 3.2.4]{Manivel}, $A^{\beta}_{pq}=0$ for $\beta $ odd and $A^{2\mid \lambda \mid}_{pq}$ has a canonical basis which is in one-to-one correspondence with all partitions $\lambda$ contained in a 
$(p-q)\times i_p$ rectangle. Following \cite{Manivel},  for any partition $\lambda$, we  denote by $\sigma_{\lambda}\in A^{2\mid \lambda \mid}_{pq}$ the corresponding cohomology class.

We  define the following cohomology extension of the fibre:
\begin{equation}
\label{cohext}H^{*}(F_{pq})\stackrel{\theta _{pq}}{\to} H^{*}(\tDpq),\quad 
\theta _{pq}(\sigma_{\lambda}):=    f_p^*(s_{\lambda}(x_1, \dots , x_{i_p})), \quad \forall \,\sigma_{\lambda}\in A^{2\mid \lambda \mid}_{pq}
\end{equation}
 where $s_{\lambda}$ is the {\it Schur polynomial} corresponding to $\lambda$ and
$x_1, \dots , x_{i_p}$ are the {\it Chern roots} of the bundle $ S_{p}$ 
(compare with \cite[(2.6), p. 18]{FultonPr}). Of course there are infinitely many other choices for the cohomology extension, one of these will be considered in \S 9.
\end{notations}

\begin{theorem}
\label{hgood} Assume that the cohomology extensions are  defined as in (\ref{cohext}). Then
the splitting $\bH \cong \bH_*$ is $h$-good. Specifically, the cup product with $h$ induces on each
$\sideset{_{}^{p}}{_{}^{}}{\mathop \hc}(R\,\pi_{r *}\mathbb Q_{\tD_{p}}[m_{r}])_{\Delta_q}$ a morphism which is homogeneous of degree two.
\end{theorem}
\begin{proof}
It suffices to prove the second statement since, by Theorem \ref{Gysinsplitting}, the $i$-th graded piece of the splitting is the image of the map induced in hypercohomology by
the component $\oplus_{q<r}D_{rq }^{\delta_{rq} + i}\otimes_{\mathbb Q}
R{\iota_{rq}}_* IC_{\Delta_q}^{\bullet}\to R\,\pi_{r *}\mathbb Q_{\tD_r}[m_r]$
of $\oplus_{q<r}\sigma_{rq}$.

The proof is very similar to the one of Theorem \ref{Deligne}.
We have to prove that the compositum
\begin{equation}\label{cuph}
D_{rq}^{\delta_{rq}+i}\otimes_{\mathbb Q}
R\iota_{rq}IC_{\Delta_q}^{\bullet}[-i]
\stackrel{\sigma_{rq}}{\rightarrow}
R{\pi_r}_{*}\mathbb Q_{\tD_r}[m_r]
\stackrel{\cup h}{\rightarrow}
R{\pi_r}_{*}\mathbb Q_{\tD_r}[m_r+2]
\rightarrow 
D_{rq}^{\delta_{rq}+2+j}\otimes_{\mathbb Q}
R\iota_{pr}IC_{\Delta_q}^{\bullet}[-j]
\end{equation}
vanishes whenever $i\not =j$. By Theorem \ref{Gysinsplitting} and Corollary \ref{afterlemma}, this amounts to  prove the vanishing of any
 matrix element of $\mc _{\overline{h}}$ lying in  
$$Hom_{D^b_c(\tD_q )}(a\otimes \mathbb Q_{\tD_q}[-i], b \otimes \mathbb Q_{\tD_q}[-j]), \quad a\in D_{rq}^{\delta_{rq}+i},     \, \,    b\in D_{rq}^{\delta_{rq}+2+j}$$
(cf. Notations \ref{matrix}).

The matrix elements of  $\mc _{\overline{h}}$ are defined by
\begin{equation}\label{tensorwithkappah}
\overline{h} \cup \theta_{rq}(a)= \sum\rho_{rq}^*(c^a_b)\cup \theta _{rq}(b),
\end{equation}
 for well determined cohomology classes   
$c_b^a\in H^{i +2-j}(\tD_q)$
(cf. (\ref{matrix})).
Hence, in order to prove (\ref{cuph}), it suffices to show that 
\begin{equation}\label{cbzero}
c_b^a=0,  \quad \text{for all} \quad a\in D_{rq}^{\delta_{pq}+i},     \,    \, b\in D_{rq}^{\delta_{pq}+2+j}, \quad \text{whenever} \quad i\not = j.
\end{equation}
\noindent
If we denote by $\lambda _{\omega}$ the partition associated to $\omega\in D_{rq}^{\bullet}$ (cf. Notations \ref{Apq}),
by the definitions of  $\theta_{rq}$ and  $h$ (cf. (\ref{cohext})  and \ref{h}), we have
$$\overline{h} \cup \theta_{rq}(a)=
f_r^*(s_{\lambda_{h}}(x_1, \dots , x_{i_r}))\cup f_r^*(s_{\lambda_a}(x_1, \dots , x_{i_r}))=
f_r^*((s_{\lambda_h}\cup s_{\lambda_a}))=
\theta_{rq}(a\cup h\mid _{F_{rq}}),
$$
 where $s_{\bullet}$ are the  Schur polynomials  and
$x_1, \dots , x_{i_r}$  the Chern roots of the bundle $ S_r$.
Taking into account (\ref{tensorwithkappah}), (\ref{cbzero}) follows immediately.
\end{proof}

\begin{corollary}[de Cataldo]
Assume that the cohomology extensions are  defined as in (\ref{cohext}) and that $h$ 
is the first Chern class of   $f_r^*(\oc_{\bG_{i_r}(F)} (1))$. Then
the five splittings considered in \cite{DeC} coincide.
\end{corollary}

\medskip

\section{Local analysis of the intersection cohomology complex.}

In this section we are aimed at a more careful local analysis of the intersection cohomology complex $IC_{\Delta_p}$. We will prove in Theorem \ref{LocStudy}  that the splitting
above
leads to a canonical decomposition of the cohomoloy spaces of $F_{pq}$, where it is possible to ``recognize'' the summand coming from $B_{pq}$ (cf. ((\ref{gammaAlow}), Theorem \ref{LocStudy}
and Remark \ref{recognize1}).
For this purpose,
we find more convenient to change our previous choice of the cohomology extension (compare with (\ref{cohext})):

\begin{notations}
\label{newcohexdef} We denote by $\overline{\theta} _{pq}$ the following cohomology extension of the fibre
\begin{equation}\label{newcohex}
H^{*}(F_{pq})\stackrel{\overline{\theta} _{pq}}{\to} H^{*}(\tDpq),\quad 
\overline{\theta} _{pq}(\sigma_{\lambda}):=    s_{\lambda}(x_1, \dots , x_{p-q}), \quad \forall \,\sigma_{\lambda}\in A^{2\mid \lambda \mid}_{pq},
\end{equation}
where $s_{\lambda}$ is the  Schur polynomial corresponding to $\lambda$ and
$x_1, \dots , x_{p-q}$ are the  Chern roots of the bundle $S_{q}\slash S_{p}$ on $\tDpq$.
\end{notations}

 Recall from \S 2.4 and \S 2.5 that
$\tD_p=\bG_{p-1}(\kc_p)$ and
$\tDpq=\bG_{i_p}(f^*_q(S_q))$. So we have $\tDpq\subset \fc_{pq}:=\bF_{p-q, p-1}(\kc_p)$, the bundle of partial $(p-q, p-1)$-flags of the vector bundle $\kc_p$ on $\bG_{i_p}(F)$.
This implies that 
$$
{\text{\it the map $\jmath_{pq} :\tDpq \to \tD_p$ factorizes by the inclusion }}
$$
\begin{equation}
\label{inclusionDeltapq}
\zeta_{pq}: \tDpq\subset \fc_{pq}=\bF_{p-q, p-1}(\kc_p)
\end{equation}
$$
{\text{\it with the natural projection }}
$$
\begin{equation}
\label{projectionFpq}
\xi_{pq}:\fc_{pq}=\bF_{p-q, p-1}(\kc_p)\to \bG_{p-1}(\kc_p)= \tD_p.
\end{equation}

\noindent
\begin{remark}
\label{GysinFactorization}
\begin{enumerate} 
\item Since  a flag $(W,Z,V)\in \fc_{pq}$ belongs to $\tDpq$ iff $Z\subset F$, the map $\zeta_{pq}$ is a {\it regular imbedding }(\cite[p. 437]{FultonIT}) of codimension $c(p-q)$. So there
is an orientation class $\zeta _{pq\, !}\in \text{Hom}(\zeta_{pq}{_*} \mathbb Q_{\tDpq}, \mathbb Q_{\fc_{pq}}[2c(p-q)])$. Furthermore, by the self-intersection formula, 
$$
{\text{\it the composite of $\tau _{pq}$ with the pull-back morphism $\zeta_{pq}{^*}:\mathbb Q_{\fc_{pq}}\to \zeta_{pq}{_*} \mathbb Q_{\tDpq}$  }}
$$
$$
{\text{\it coincides with the  cup product with $c_{p-q}(S_{q}\slash S_{p})^c$:}}  
$$
\begin{equation}
\label{cupwithctop}
\zeta_{pq}{^*}\circ \zeta _{pq\, !}= \cup c_{p-q}(S_{q}\slash S_{p})^c: \mathbb Q_{\tDpq} \to \mathbb Q_{\tDpq}[2c(p-q)].
\end{equation}
\item The projection (\ref{projectionFpq}) gives rise to a  Gysin map 
$$\xi _{pq\, !}\in \text{Hom}(\xi _{pq}{_*} \mathbb Q_{\fc_{pq}}, \mathbb Q_{\tD_{p}}[-2(p-q)(q-1)])$$
 which  is a  particular case of the one described in \cite[Exercise 3.8.3]{Manivel}.
\item The Gysin morphism 
$R{\jmath_{pq}}{_*}\mathbb Q_{\tDpq}\to \mathbb Q_{\tD_p}[2d_{pq}]$ is the composition 
$\xi _{pq\, !}\circ \zeta _{pq\, !}$.
\end{enumerate}
\end{remark}

\medskip

Fix $l<q<p$ and consider the following commutative diagram
$$
\begin{array}{ccccccc}
 \tDpq ^l & \subset  & \widetilde{\Delta}_{pq} & \stackrel{\tilde\pi_{pq}}{\longrightarrow} & \Delta_{pq} & \stackrel{\tilde\iota_{pq}}{\hookrightarrow} & \widetilde\Delta_p \\
& & \stackrel {\rho_{pq}}{}\downarrow & &\stackrel {\pi_{pq}}{}\downarrow & & \stackrel {\pi_{p}}{}\downarrow  \\
& & \widetilde\Delta_q &\stackrel{\pi_q}{\longrightarrow} & \Delta_q &\stackrel{\iota_{pq}}{\hookrightarrow} & \Delta_p  \\
& &  \cup & & \cup & & \\
\tD_{ql}^0 & \stackrel{\tilde\pi_{ql}}{\cong} &  \Delta_{ql}^0 & \stackrel{\pi_{ql}}{\to}  & \Delta_l^0 & & \\
\end{array}
$$
where all the characters were previously defined except for $$\tDpq ^l:= \rho_{pq}^{-1}(\Delta_{ql}^0)\cong\bF_{i_p , i_q }(f^*_l(S_l)\mid _{\widetilde\Delta_l^0}).$$
The pull-back of (\ref{trueGysin}) to $\Delta_r^0$  gives rise to  a morphism in $D_c^b(\Delta_r^0)$:
\begin{equation}
\label{GysinDeltarpq}
 R{(\pi_{ql} \circ \rho_{pq})}{_*}\mathbb  Q_{\tDpq^l} \cong R{(\pi_p \circ \jmath_{pq})}{_*}\mathbb  Q_{\tDpq}\mid_{\tD_l^0} \to R{\pi_p }{_*}\mathbb Q_{\tD_p}[2d_{pq}]\mid_{\tD_l^0}
\cong R{\pi_{pq} }{_*}\mathbb Q_{\tD_{pr}^0}[2d_{pq}].
\end{equation}

\medskip

\begin{remark}
\label{ChernRootsFlag} Obviously, we have an inclusion $\tDpq \subset \bF_{i_p, i_q, k}(\bC ^l)$, so the Chern roots $(x_1, \dots , x_{p-q})$ introduced in Notations \ref{newcohexdef} can be seen as the pull-back in $\tDpq$ of  a subset of the roots
$(t_1, \dots , t_{t})$ of $\bF(\bC^l)$ \cite[p. 161]{FultonIT}, the variety of complete flags of $\bC^l$. For instance, in (\ref{newcohex}) we can also put 
$\overline{\theta} _{pq}(\sigma_{\lambda})=s_{\lambda}(x_1, \dots , x_{p-q})=s_{\lambda}(t_{i_p+1}, \dots , t_{i_q})$.
\end{remark}

\begin{notations}
Since the fibres of the projection:
$$\pi_{ql} \circ \rho_{pq}:\tDpq ^l\cong\bF_{i_p , i_q }(f^*_l(S_l)\mid _{\widetilde\Delta_l^0}) \to \widetilde\Delta_l^0$$
are Flag manifolds $\bF_{i_p , i_q }(\bC^{i_l})$  and $H^*(\bF_{i_p , i_q })=H^*(\bG_{i_p}(\bC^{i_q}) \times \bG_{i_q }(\bC^{i_l}))$,
we have also  a decomposition in $D^b_c(\Delta_l^0)$:
\begin{equation}\label{LHProduct}
\sum_{\beta=0}^{2k_{ql}}\sum_{\alpha=0}^{2k_{pq}} A^{\beta}_{ql}\otimes A^{\alpha}_{pq}\otimes \mathbb Q_{\Delta_l^0}[-\alpha- \beta]
\stackrel{\overline{\theta}_{pq}\otimes\overline{\theta}_{ql}}{\longleftrightarrow} R(\pi _{ql}\circ \rho_{pq}){_*} \mathbb Q_{\tDpq ^l}.
\end{equation}
\end{notations}

\medskip
Combining (\ref{GysinDeltarpq}) and (\ref{LHProduct}), we get a morphism in $D^b_c(\Delta _l^0)$:
$$
\gamma^l_{pq}: \sum_{\beta=0}^{2k_{ql}}\sum_{\alpha=0}^{2k_{pq}} A^{\beta}_{ql}\otimes A^{\alpha}_{pq}\otimes \mathbb Q_{\Delta_l^0}[-\alpha- \beta]
\stackrel{\overline{\theta}_{pq}\otimes\overline{\theta}_{ql}}{\longleftrightarrow}
 R{(\pi_{ql} \circ \rho_{pq})}{_*}\mathbb  Q_{\tDpq^l}  \to 
 $$
 $$\to R{\pi_{pl} }{_*}\mathbb Q_{\tD_{pl}^0}[2d_{pq}]\stackrel{\overline{\theta}_{pl}}{\cong}
\left( \sum_{\alpha=0}^{2k_{pl}} A^{\alpha}_{pl}\otimes R\pi_{pl*}\mathbb Q_{\Delta_l^0}[-\alpha] \right) [2d_{pq}]. 
$$

\begin{proposition}\label{gamma}
Fix $\sigma_{\lambda}\in A^{\beta}_{ql}$ and $\sigma_{\mu}\in A^{\alpha}_{pq}$, with $\lambda:=(\lambda_1, \dots , \lambda_{q-l})$ and $\mu:=(\mu_1, \dots , \mu_{p-q})$. 
If we set $\nu:=(\nu_1, \dots , \nu_{p-l})=(\mu_1+c+1-q, \dots , \mu_{p-q}+c+1-q, \lambda_1 , \dots , \lambda_{q-l})$ then we have
$$\gamma^l_{pq}=\overline{\theta}_{pl}(\sigma _{\nu}) \quad \text{in}$$
$$
Hom_{D^b_c(\Delta^0_{l})}(\sigma_{\lambda}\otimes \sigma_{\mu} \otimes \mathbb Q_{\Delta_l^0}[-\alpha- \beta], R{\pi_{pl} }{_*}\mathbb Q_{\tD_{pl}^0}[2d_{pq}])
\cong Hom_{D^b_c(\Delta^0_{l})}( \mathbb Q_{\Delta_l^0}[-\alpha- \beta], R{\pi_{pl} }{_*}\mathbb Q_{\tD_{pl}^0}[2d_{pq}]).
$$
\end{proposition}

\begin{proof} 
By Remark \ref{GysinFactorization}, 
we have a factorization $\jmath_{pq}=\xi _{pq\, !}\circ \zeta _{pq\, !}$. Again by Remark \ref{GysinFactorization}, $ \zeta _{pq\, !}$ is induced by the cup product with $c_{p-q}(S_{q}\slash S_{p})^c$.
As for the morphism $\xi _{pq\, !}$, we recall that it is just the Gysin morphism induced by the fibration
(\ref{projectionFpq}) 
$$\xi_{pq}:\fc_{pq}=\bF_{p-q, p-1}(\kc_p)\to \bG_{p-1}(\kc_p)= \tD_p,$$
hence it  is a  particular case of the one described in \cite[Exercise 3.8.3]{Manivel}.
Observe that $\xi_{pq}$ is a smooth fibration with fibres $\bG _{p-q}(\bC^{p-1})$,
so the relative  dimension is $(p-q)\cdot(q-1)$. 

In view of  (\ref{newcohex}), we have
$$ 
\jmath_{pq\, !}\circ \cup \theta_{ql}(\sigma_{\lambda}) \cup \theta_{pq}(\sigma_{\mu})=
\jmath_{pq\, !}\circ \cup s_{\lambda} \cup s_{\mu}, \quad \text{in} \quad Hom_{D^b_c(\Delta^0_{pl})}(R{\jmath_{pq} }{_*}\mathbb Q_{\tD_{pq}^l}\mid_{\tD_{pl}^0}, \mathbb Q_{\tD_{pl}^0}[2d_{pq}+\alpha +\beta] ),
$$
(here and in what follows we use the notations of the bivariant theory \cite[\S 2]{FultonCF}). 

Combining $\jmath_{pq}=\xi _{pq\, !}\circ \zeta _{pq\, !}$ with the notations of Remark \ref{ChernRootsFlag} and with \cite[p. 25]{FultonCF},
we get:
$$ 
\zeta _{pq}{_*}(\jmath_{pq\, !}\circ \cup s_{\lambda} \cup s_{\mu})=
\xi_{pq\, !}\circ \cup s_{\lambda} \cup s_{\mu}\cup s,
\quad \text{in} \quad Hom_{D^b_c(\Delta^0_{pl})}(R{\xi_{pq} }{_*}\mathbb Q_{\fc_{pq}}\mid_{\tD_{pl}^0}, \mathbb Q_{\tD_{pl}^0}[2d_{pq}+\alpha +\beta] ),
$$
where $s_{\mu}=s_{\mu}(t_{i_p+1}, \dots ,t_{i_q})$, $s_{\lambda}=s_{\lambda}(t_{i_q+1}, \dots ,t_{i_l})$
and $s:=c_{p-q}(S_q\slash S_p)^c$. By \cite[Remark 3.6.21]{Manivel}, the class $s_{\lambda}(t_{i_q+1}, \dots ,t_{i_l})$
can be extended to the class $s_{\lambda}(t_{i_q+1}, \dots ,t_{k})\in H^{\bullet}(\fc_{pq})$ in such a way the the restriction to $(\pi_{pl} \circ \xi_{pq})^{-1}(\Delta_l^0)$ is equivalent to
putting $t_{i_l+1}= \cdots = t_k=0$. Hence 
$
\xi_{pq\, !}\circ \cup s_{\lambda} \cup s_{\mu}\cup s
$
is well defined.

By \cite[Exercise 3.8.3, p. 148]{Manivel}, we conclude
$$
\xi _{pq}{_*}(\xi_{pq\, !}\circ \cup s_{\lambda} \cup s_{\mu}\cup s)=\cup s_{\nu}=\cup \theta_{pl}(\sigma_{\nu}),
\quad \text{in} \quad Hom_{D^b_c(\Delta^0_{pl})}(\mathbb Q_{\tD_{pl}^0}, \mathbb Q_{\tD_{pl}^0}[2d_{pq}+\alpha +\beta] ).
$$
We are done simply by applying the pull-back from $\Delta^0_l$.
\end{proof}

\medskip

\begin{theorem}
\label{LocStudy}
\begin{enumerate}
\item The complex $IC_{\Delta_p}^{\bullet}[-m_p]\mid _{\Delta_q^0}$ is quasi-isomorphic to a sub local system  of $R\pi_{pq}{_*} \bQ _{\tDpq ^0}$. More precisely, we have
$$IC_{\Delta_p}^{\bullet}[-m_p]\mid _{\Delta_q^0} \cong \sum_{\alpha=0}^{2\overline{k}_{pq}} B^{\alpha}_{pq}\otimes \mathbb Q_{\Delta_q^0}[-\alpha]
 \lhd 
\sum_{\alpha=0}^{2k_{pq}} A^{\alpha}_{pq}\otimes \mathbb Q_{\Delta_q^0}[-\alpha]
\cong R\pi_{pq}{_*} \bQ _{\tDpq ^0}.    $$
\item The canonical decomposition by supports of
$$A^{i+ m_p}\otimes_{\bQ }\bQ _{\Delta_q^0}\cong \sideset{_{}^{p}}{_{}^{i}}{\mathop \hc}(R\,\pi_{p *}\mathbb Q_{\tD_{p}}[m_{p}])\mid_{\Delta_q^0} \cong 
\bigoplus_{l\geq q}\left(\sideset{_{}^{p}}{_{}^{i}}{\mathop \hc}(R\,\pi_{p *}\mathbb Q_{\tD_{p}}[m_{p}])_{\Delta_l}\right)\mid_{\Delta_q^0}
$$
(cf. \cite[\S 1.1]{DeC}) coincides with the one given in (\ref{gammaAhigh})  and  is provided by the  morphisms $\gamma^q_{pr}$ ($i\geq 0$): 
\begin{equation}
\label{gammaAfinal}
A^{i+m_p}_{pq}\otimes \bQ _{\Delta_q^0}  \cong \bigoplus_{r=q+1}^{p-1}\bigoplus_{ \beta + \gamma =i+m_p} \left( \gamma^q_{pr}(D_{pr }^{ \beta-2d_{pr}}\otimes B_{rq}^{\gamma}\otimes \bQ _{\Delta_q^0}) \right) \oplus \gamma^0_{pq}
(D_{pq }^{i+m_p-2d_{pq}}\otimes \bQ _{\Delta_q^0}).
\end{equation}
Furthermore, the splitting (\ref{gammaAfinal}) is canonical namely independent of the cohomology extension.
\end{enumerate}
\end{theorem}

\begin{proof}
The proof is essentially identical  to the one of Theorem \ref{PerverseCohomology}, so we are going to be rather  sketchy.
The main point consists in observing that Proposition \ref{gamma} implies that the maps $\tilde{\gamma}^q_{pr}$ introduced on purely algebraic grounds in the proof of Therem
\ref{PerverseCohomology}  coincide with the ones defined before Proposition \ref{gamma}. Specifically, Proposition \ref{gamma} shows that 
$$
\gamma^q_{pr}(D_{pr }^{ \beta-2d_{pr}}\otimes B_{rq}^{\gamma}\otimes \bQ _{\Delta_q^0}) = \tilde{\gamma}^q_{pr}(D_{pr }^{ \beta-2d_{pr}}\otimes B_{rq}^{\gamma})\otimes \bQ _{\Delta_q^0}
\lhd A^{i+ m_p}\otimes_{\bQ }\bQ _{\Delta_q^0},
$$
as trivial local systems on $\Delta_q^0$. Thus (2) follows at once from (\ref{gammaAhigh}) and (2) follows just combining (\ref{gammaAlow}) with (1), similarly as in the last part of the proof of Theorem \ref{PerverseCohomology}.

Finally, the independence of the cohomology extensions follows exactly as in the proof of Theorem \ref{isoDE}.
\end{proof}

\medskip

\begin{remark}\label{recognize1}
Observe that Theorem \ref{LocStudy} is in accordance with the description of $IC_{\Delta_p}^{\bullet}[-m_p]\mid _{\Delta_q^0}$ obtained by means of the small resolution, thus answering the 
question raised in the introduction.
\end{remark}
\bigskip

\end{document}